\documentclass{amsart}

\usepackage[utf8]{inputenc}
\usepackage[T1]{fontenc}
\usepackage{float}

\usepackage{xfrac}

\usepackage{bm}

\usepackage{baskervillef}
\usepackage[baskerville, varg]{newtxmath}
\usepackage[scr=boondox,cal=boondoxo,bb=boondox,frak=euler]{mathalfa}
\linespread{1.05}

\usepackage{xcolor}
\definecolor{darkred}{rgb}{0.4,0,0}
\definecolor{darkgreen}{rgb}{0,0.4,0}
\definecolor{darkblue}{rgb}{0,0,0.4}

\definecolor{darkgray}{rgb}{0.55,0.55,0.55}
\definecolor{lightgray}{rgb}{0.9,0.9,0.9}

\usepackage[colorlinks,citecolor=darkgreen,linkcolor=darkred,urlcolor=darkblue,filecolor=darkred]{hyperref}
\usepackage[noabbrev, nameinlink]{cleveref}
\makeatletter
\def\@cite#1#2{{\m@th\upshape\bfseries%
[{#1\if@tempswa{\m@th\upshape\mdseries, #2}\fi}]}}
\makeatother
\usepackage{subcaption}

\usepackage[style=alphabetic,natbib=true,giveninits=true,url=false,maxbibnames=99]{biblatex}

\setcounter{biburllcpenalty}{1}
\setcounter{biburlucpenalty}{1}
\setcounter{biburlnumpenalty}{1}
\DeclareFieldFormat{title}{\mkbibquote{#1}}
\DeclareFieldFormat[misc]{date}{Preprint (#1)}
\addbibresource{bibliography.bib}

\AtEveryBibitem{\clearfield{issn}}
\AtEveryCitekey{\clearfield{issn}}

\usepackage{graphicx}
\usepackage{multirow}
\usepackage{enumitem}
\usepackage{todonotes}

\parindent=0pt
\setcounter{MaxMatrixCols}{20}
\allowdisplaybreaks[4]

\renewcommand{\H}{\mathcal{H}}
\newcommand{\M}{\mathcal{M}}
\renewcommand{\O}{\mathcal{O}}

\newcommand{\CC}{\mathbb{C}}

\newcommand{\NN}{\mathbb{N}}
\newcommand{\QQ}{\mathbb{Q}}
\newcommand{\RR}{\mathbb{R}}
\newcommand{\ZZ}{\mathbb{Z}}

\newcommand{\modelL}{\mathrm{L}}
\newcommand{\modelP}{\mathrm{P}}

\newcommand{\tr}{{\mbox{\raisebox{0.33ex}{\scalebox{0.6}{$\intercal$}}}}}

\renewcommand{\Re}{\mathrm{Re}}

\newcommand{\Jac}{\mathrm{Jac}}
\newcommand{\Aut}{\mathrm{Aut}}
\newcommand{\End}{\mathrm{End}}
\newcommand{\Aff}{\text{Aff}} 

\newcommand{\SL}{\mathrm{SL}}

\newcommand{\GL}{\mathrm{GL}}
\newcommand{\GA}{\mathrm{GA}}

\newcommand{\Sym}{\mathrm{Sym}}
\newcommand{\Dih}{\mathrm{Dih}}

\newcommand{\DihFour}[4]{\ensuremath{\mathop{\mathrm{Dih}}\Biggl(\!\!
\raisebox{-.4\height}{
\begin{tikzpicture}[scale=.6,every node/.style={scale=.75,draw=none}]
\node (V1) at (-{(1+3)*360/4+90}:1) {\ensuremath{#1}};
\node (V2) at (-{(2+3)*360/4+90}:1) {\ensuremath{#2}};
\node (V3) at (-{(3+3)*360/4+90}:1) {\ensuremath{#3}};
\node (V4) at (-{(4+3)*360/4+90}:1) {\ensuremath{#4}};
\draw[thick] (V1) -- (V2) -- (V3) -- (V4) -- (V1);
\end{tikzpicture}
}\!\!\Biggr)
}}
\newcommand{\DihFive}[5]{\ensuremath{\mathop{\mathrm{Dih}}\Biggl(\!
\raisebox{-.4\height}{
\begin{tikzpicture}[scale=.6,every node/.style={scale=.75,draw=none}]
\node (V1) at (-{(1+4)*360/5+90}:1) {\ensuremath{#1}};
\node (V2) at (-{(2+4)*360/5+90}:1) {\ensuremath{#2}};
\node (V3) at (-{(3+4)*360/5+90}:1) {\ensuremath{#3}};
\node (V4) at (-{(4+4)*360/5+90}:1) {\ensuremath{#4}};
\node (V5) at (-{(5+4)*360/5+90}:1) {\ensuremath{#5}};
\draw[thick] (V1) -- (V2) -- (V3) -- (V4) -- (V5) -- (V1);
\end{tikzpicture}
}\!\Biggr)
}}
\newcommand{\fr}{\mathrm{fr}}

\newcommand{\modn}[1]{\mathbin{\equiv_{#1}}}
\newcommand{\nmodn}[1]{\mathbin{{\not\equiv}_{#1}}}

\newtheorem{thm}{Theorem}[section]

\newtheorem{lem}[thm]{Lemma}
\crefname{lem}{lemma}{lemmas}
\newtheorem{prop}[thm]{Proposition}
\crefname{prop}{proposition}{propositions}
\newtheorem{cor}[thm]{Corollary}
\theoremstyle{definition}
\newtheorem{defn}[thm]{Definition}
\theoremstyle{remark}
\newtheorem{rem}[thm]{Remark}

\DeclareFieldFormat{title}{\mkbibquote{#1}}

\hyphenation{Abe-lian}

\makeatletter
\g@addto@macro\bfseries{\boldmath}
\makeatother

\begin{document}

	\title[Permutation of periodic points on $\H(2)$]{Permutation of periodic points of Veech surfaces in $\H(2)$}
	
	\author[R. Gutiérrez-Romo]{Rodolfo Gutiérrez-Romo}
	\address[Rodolfo Gutiérrez-Romo]{Centro de Modelamiento Matemático, CNRS-IRL 2807, Universidad de Chile, Beauchef 851, Santiago, Chile.}
	\email{g-r@rodol.fo}
	\urladdr{http://rodol.fo}
	
	\author[A. Pardo]{Angel Pardo}
	\address[Angel Pardo]{Departamento de Matemática y Ciencia de la Computación, Universidad de Santiago de Chile, Las Sophoras 173, Estación Central, Santiago, Chile.
}
	\email{angel.pardo@usach.cl (corresponding author)}
	
	\subjclass[2020]{14H55 (primary), and 37C85, 37D40 (secondary)} 
	
	\keywords{Veech group, translation surface, Veech surface, lattice surface, Weierstrass point, periodic point, permutation groups, dihedral groups}
	
	\thanks{This work was supported by the Center for Mathematical Modeling (CMM), ACE210010 and FB210005, BASAL funds for centers of excellence from ANID-Chile, and the MATH-AmSud 21-MATH-07 grant.
	The first named author was also supported by ANID-Chile through the FONDECYT Iniciación 11190034 grant, while the second named author was also supported by ANID-Chile through the FONDECYT Postdoctorado 3190257 and FONDECYT Regular 1221934 grants.}

	\begin{abstract}
	    We study how are permuted Weierstrass points of Veech surfaces in $\H(2)$, the stratum of Abelian differentials on Riemann surfaces in genus two with a single zero of order two.
	    These surfaces were classified by McMullen relying on two invariants: discriminant and spin.
    	More precisely, given a Veech surface in $\H(2)$ of discriminant $D$, we show that the permutation group induced by the affine group on the set of Weierstrass points is isomorphic to $\Dih_4$, if $D \modn{4} 0$; to $\Dih_5$, if $D \modn{8} 5$; and to $\Dih_6$, if $D \modn{8} 1$.
        Moreover, these same groups arise when considering only Dehn multitwists of the affine group.
	\end{abstract}
	
	\maketitle
 	\setcounter{tocdepth}{1}

\section{Introduction}
	In this work, we study how the Weierstrass points of a genus two flat surface are permuted by affine transformations.
	More precisely, these flat surfaces correspond to \emph{translation surfaces} or, equivalently, Abelian differentials on Riemann surfaces, and have a natural flat metric with conical singularities at the zeros of the differential.
	Furthermore, their moduli spaces are stratified according to the combinatorics of zeros of the corresponding differentials.
    There is a natural action of $\SL(2, \RR)$ on each stratum, which generalizes the action of $\SL(2, \RR)$ on the space $\GL(2, \RR)/\SL(2, \ZZ)$ of flat tori. The dynamical, algebraic and geometric properties of this action have been extensively studied. We refer the reader to the survey by Zorich for more details \cite{Zorich:survey}.
    
    Given a translation surface $X$, its stabilizer in $\SL(2, \RR)$ is called the \emph{Veech group} of $X$ and is denoted $\SL(X)$. If this group is a lattice, $X$ is said to be a \emph{Veech surface}. The derivative map $D\colon \Aff(X) \to \SL(X)$ defines a finite-to-one group homomorphism, where $\Aff(X)$ denotes the group of affine diffeomorphisms of $X$, that is, diffeomorphisms whose local expressions are affine with respect to the flat metric.
    
    When $X$ is a Veech surface, we say that a point $x \in X$ is \emph{periodic} if its $\Aff(X)$-orbit is finite. Periodic points are significant to various questions in billiard dynamics, such as the illumination or finite blocking problems. Hence, they have attracted a great deal of attention during recent years.
    
    In this context, our goal is to study the following question: how can periodic points in a Veech surface be permuted by the affine group?
    
    In general, explicitly determining the periodic points of a surface has proven to be very difficult. In some cases, however, the answer is known. In particular, this holds for Veech surfaces in genus $2$, and special kinds of Veech surfaces in genus $3$ and $4$ (known as \emph{Weierstrass Prym eigenforms}). This classification depends on whether the translation surface is \emph{primitive}, which is defined as not being the pullback of a holomorphic one-form on a lower-genus Riemann surface under a holomorphic branched cover. Möller~\cite{Moeller} showed that the periodic points on primitive Veech surfaces in genus two coincide with the Weierstrass points, and this was very recently extended to primitive Weierstrass Prym eigenforms in genus $3$ and $4$ by Freedman \cite{Freedman}. For non-primitive such Veech surfaces, the set of periodic points is dense and it is also known by these works.
    
    In the case of genus two, the moduli space of translation surfaces has two connected strata, namely $\H(2)$ and $\H(1,1)$, consisting of translation surfaces with a single conical singularity of angle $6\pi$, or two conical singularities of angle $4\pi$, respectively.
    Veech surfaces of genus two were classified by McMullen~\cite{McMullen:classification} in terms of two invariants: a real quadratic \emph{discriminant} $D \geq 5$, together with a \emph{spin} invariant when $D \modn{8} 1$.

    In the stratum $\H(1,1)$,
    McMullen~\cite{McMullen:decagon} shows that the only primitive Veech surfaces are given by (affine deformations of) the regular decagon surface. It is possible to show that the group of permutations obtained by the action of the affine group on its six Weierstrass points is isomorphic to $\Dih_5$, the dihedral group of order~$10$.
    Note that this case corresponds to discriminant $D = 5$.
    
    In the case of Veech surfaces in the stratum $\H(2)$, the singularity is always a Weierstrass point and is a fixed point for the action of the affine group.
    Thus, we are interested on the action on the \emph{regular} Weierstrass points, that is, Weierstrass points that are regular for the flat metric, and we prove the following.
    
	\begin{thm}
		Let $X \in \H(2)$ be a Veech surface of discriminant $D \geq 9$ and let $w_1, \dotsc, w_5$ be its regular Weierstrass points. Let $G(X) \leqslant \Sym(\{w_1, \dotsc, w_5\})$ be the permutation group induced by the action of $\Aff(X)$ on $\{w_1, \dotsc, w_5\}$. Then, $G(X)$ is isomorphic to:
		\begin{itemize}
			\item $\Dih_4$ if $D \modn{4} 0$;
			\item $\Dih_5$ if $D \modn{8} 5$; and
			\item $\Dih_6$ if $D \modn{8} 1$, regardless of the spin invariant;
		\end{itemize}
		where $\Dih_n$ is the dihedral group of order $2n$.
		\label{thm:main_H(2)}
	\end{thm}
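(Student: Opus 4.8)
The plan is to trap $G(X)$ between an upper bound coming from a torsion (``HLK-type'') obstruction and a lower bound coming from explicit parabolic elements, and to check that the two agree. As a preliminary normalisation I would replace $X$ by a convenient representative of its Teichmüller curve: by McMullen's classification of Teichmüller curves in $\H(2)$ by discriminant \cite{McMullen:discriminant_spin}, every component of the locus of discriminant $D$ contains an $L$-shaped \emph{prototype} surface, on which the six Weierstrass points --- the order-$2$ zero together with the midpoints of the core curves of the two horizontal cylinders and of the relevant horizontal saddle connections --- have completely explicit coordinates. Since $\Aut(X)$ is trivial I identify $\SL(X)\cong\Aff(X)$ and only track the induced permutations of $\{w_1,\dots,w_5\}$, the zero being fixed by every affine map. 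Finally I would fix the classical dictionary identifying the nonzero elements of $\Jac(X)[2]\cong(\ZZ/2\ZZ)^4$ with the unordered pairs of Weierstrass points, under which the action of $\Aff(X)$ on $\Jac(X)[2]$ becomes the permutation action on the six Weierstrass points and $\Sp(4,\FF_2)$ becomes $S_6$.

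\textbf{Upper bound.} The Teichmüller curve of discriminant $D$ carries a canonical real multiplication by the order $\O_D$, and every affine automorphism commutes with it; hence the image of $\Aff(X)$ in $\Sp(4,\FF_2)\cong S_6$ is contained in the subgroup $R_D$ of symplectic transformations that are $\O_D/2\O_D$-linear, and the shape of $R_D$ is controlled by the behaviour of $2$ in $\O_D$. If $2$ splits, i.e.\ $D\modn{8}1$, then $\O_D/2\O_D\cong\FF_2\times\FF_2$, its idempotents are $\Aff(X)$-invariant, and $R_D\cong\Sp(2,\FF_2)\times\Sp(2,\FF_2)\cong S_3\times S_3$ acts on the six Weierstrass points with two orbits of three (the two $\FF_2$-eigenspaces never being mixed). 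If $2$ ramifies, i.e.\ $D\modn{4}0$, then $\O_D/2\O_D\cong\FF_2[\varepsilon]/(\varepsilon^2)$ and $R_D\cong\Sp\bigl(2,\FF_2[\varepsilon]/(\varepsilon^2)\bigr)$, of order $48$, which acts transitively on the six Weierstrass points. If $2$ is inert, i.e.\ $D\modn{8}5$, then $\O_D/2\O_D\cong\FF_4$, the Weil pairing becomes $\FF_4$-symplectic, and $R_D\cong\Sp(2,\FF_4)\cong A_5$ --- which, crucially, embeds in $S_6$ as a \emph{transitive} copy of $A_5$. As $\Aff(X)$ also fixes the zero, $G(X)$ lies in the stabiliser of a Weierstrass point inside $R_D$, and a short computation identifies this stabiliser, acting on the remaining five points, with $S_2\times S_3\cong\Dih_6$ in the split case, with a group of order $8$ (hence $\cong\Dih_4$, as all order-$8$ subgroups of $S_5$ are) in the ramified case, and with the point stabiliser of the transitive $A_5$, namely $\Dih_5$, in the inert case. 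In the split and ramified cases this recovers and uniformises the HLK argument for the arithmetic surfaces --- where the reduction is literally the covering map to a torus $E$ and the relevant partition of $\{w_1,\dots,w_5\}$ into blocks of sizes $(3,1,1)$ or $(1,2,2)$ is the one induced by the projection to $E[2]$, permuted through $\SL(2,\ZZ/2\ZZ)\cong S_3$ --- together with its extension to the non-arithmetic case when $D$ is a quadratic residue modulo $8$; the inert case is exactly the one in which no such $S_3$-partition exists and one must work with the $\FF_4$-structure instead.

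\textbf{Lower bound via parabolics.} For the reverse inclusions I would, on each prototype, list enough parabolic elements of $\SL(X)$ --- the Dehn twists of the horizontal and vertical cylinder decompositions together with the twists of the further periodic directions reached through the standard moves between prototypes --- and compute their effect on the Weierstrass points directly: a full Dehn twist translates the core curve of each invariant cylinder it crosses by half the circumference, hence transposes the two Weierstrass points on that core and fixes every Weierstrass point outside the open cylinders. Composing such ``elementary'' transpositions coming from different periodic directions, one checks on one prototype per residue class (and per spin) that the resulting permutations already generate $\Dih_4$, $\Dih_5$, or $\Dih_6$ as appropriate --- for instance, the product of two suitable double transpositions arising from two different two-cylinder directions is a five-cycle, and together with one of them it generates $\Dih_5$. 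This simultaneously establishes the refinement that the same groups arise from parabolic elements alone. To make the verification independent of the chosen prototype I would show that the subgroup of $G(X)$ produced in this way is invariant under the moves relating prototypes of a fixed discriminant, reducing everything to finitely many explicit checks.

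\textbf{Main obstacle.} I expect the delicate case to be the inert one, $D\modn{8}5$. On the upper-bound side this is where the $S_3$-partition disappears, and pinning $G(X)$ to $\Dih_5$ relies on the fact that $\Sp(2,\FF_4)\cong A_5$ sits in $S_6$ as the \emph{transitive} copy of $A_5$ (so that the stabiliser of the zero is dihedral of order $10$, not an $A_5$ acting on five letters as it would be for the intransitive copy), together with checking that this argument applies to \emph{every} $D\modn{8}5$ and not merely to the small ones, where $\SL(X)$ is a triangle group and the order-$5$ symmetry is visible by inspection. On the lower-bound side, the matching difficulty is the uniformity over the infinitely many prototypes of a given discriminant, for which the invariance of the realised permutation group under the prototype moves is the essential ingredient. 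By comparison, the determination of $R_D$ and of the induced partitions in the split and ramified cases, and the computation of the parabolic permutations on individual prototypes, is comparatively routine --- though one must stay alert to the feature, consistent with the statement, that the spin invariant leaves no trace whatsoever in any of this data.
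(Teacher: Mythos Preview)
Your upper-bound strategy---constraining $G(X)$ via the $\O_D/2\O_D$-linearity of the monodromy on $H_1(X;\FF_2)$ and then identifying the centraliser $R_D$ inside $\Sp(4,\FF_2)\cong S_6$---is genuinely different from the paper's, which works instead with explicit coordinates of Weierstrass points on prototype surfaces: an HLK-type partition argument when $D$ is a quadratic residue modulo~$8$, and, for $D\equiv 5\pmod 8$, an ad~hoc computation showing that any affine map fixing $w_1$ must preserve $\{w_2,w_3\}$, combined with the list of subgroups of $\Sym_5$ of order divisible by~$10$. Your approach is more uniform and, in the inert case, conceptually cleaner: the fact that $\SL(2,\FF_4)$ acts transitively on the $15$ nonzero vectors of $\FF_4^2$ (hence on pairs of Weierstrass points) pins down the transitive copy of $A_5$ in $S_6$, and thus the $\Dih_5$ stabiliser, without any surface-by-surface computation.

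There is, however, a genuine gap in the ramified case $D\equiv 0\pmod 4$. Your claim that $R_D\cong\SL\bigl(2,\FF_2[\varepsilon]/(\varepsilon^2)\bigr)$ has order~$48$ and acts transitively on the six Weierstrass points is not justified. First, $R_D$ is by definition the group of $\O_D/2$-linear maps preserving the $\FF_2$-valued symplectic form, which need not coincide with $\SL(2,\O_D/2)$; the latter preserves an $\O_D/2$-valued lift of the form, whose existence you have not established. Second, the centraliser in $\Sp(4,\FF_2)$ of a self-adjoint rank-two nilpotent $\varepsilon$ depends on a further invariant---the symmetric bilinear form that $\varepsilon$ induces on $\mathrm{coker}\,\varepsilon$ via the symplectic pairing---and different types give non-conjugate $\varepsilon$'s with centralisers of different orders and orbit structures on the six Weierstrass points. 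To repair this you would need to compute, on one prototype, the matrix of the real multiplication on $H_1$ modulo~$2$, identify which $R_D$ actually occurs, and verify transitivity; at that point the argument is no longer purely structural, and the paper's direct reading of the $(1,[2,2,0])$ partition from coordinates is arguably simpler.

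On the lower-bound side your plan matches the paper's in spirit, but one detail is glossed over: the primitive parabolic in $\SL(X)$ for a two-cylinder direction is a \emph{multi}-twist, performing $k_i$ Dehn twists on cylinder $C_i$, and whether the two Weierstrass points on a core curve are swapped depends on the parity of $k_i$, i.e., on the ratio of moduli reduced modulo~$2$. The paper tracks these parities carefully and finds that for $D\equiv 1\pmod 8$ the horizontal and vertical twists on any reduced prototype generate only $\Sym_3$; a separate (non-reduced) prototype is then exhibited whose horizontal twist gives a double transposition, with $D\in\{9,33\}$ requiring a third explicit direction instead. Your sketch would need to confront this casework.
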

	
    Note that the previous theorem implies that the isomorphism class of $G(X)$ is a ``weak'' invariant: it allows us to deduce the residue class of the discriminant $D$, but not the spin invariant. Furthermore, the classification does not depend on whether the Veech surface is primitive or not, although the proofs are done separately for these two cases.

    	\subsubsection*{The HLK-invariant}
        	In the non-primitive case, a Veech surface $X \in \H(2)$ is \emph{arithmetic}, that is, it corresponds to a branched cover of a flat torus (elliptic curve), branched over the $1$-torsion point.
    	    It follows that the regular Weierstrass points on $X$ project down to $2$-torsion points on the torus. Moreover, any element of $\SL(2, \RR)$ preserves both the sets of $1$-torsion points and of $2$-torsion points. These observations give rise to an invariant for the $\SL(2, \RR)$-orbit of $X$, defined as the combinatorics of these projections on the covered torus; it is known as \emph{HLK-invariant} since it was first considered by Kani~\cite{Kani} and Hubert--Lelièvre~\cite{Hubert-Lelievre} (see also the work by Matheus--Möller--Yoccoz \cite{Matheus-Moeller-Yoccoz}).
    	    This invariant naturally restricts the action of $\SL(X)$ on the set of regular Weierstrass points.
    	    In this context, \Cref{thm:main_H(2)} shows that in fact this is the only restriction.
    	    
        	Moreover, in the non-arithmetic case, two distinct situations can arise. If the discriminant $D$ is a quadratic residue modulo~$8$, a suitable reduction allows us to apply the same criterion even if the periods are no longer integral (they belong to a real quadratic field).
        	This is no longer true when $D$ is a quadratic nonresidue modulo $8$ and, thus, we are not able to extend this argument for $D \modn{8} 5$.
            However, we are still able to adapt these ideas to constrain the action of the affine group on regular Weierstrass points and show that these restrictions are, as well, the only ones.
    
        \subsubsection*{Generators}
            A natural follow-up question is which elements of the affine group are required to generate the group $G(X)$ as in \Cref{thm:main_H(2)}.
            In our proof, we use affine Denh multitwists, that are in correspondence with parabolic elements in the Veech group.
            As a consequence of our proof of \Cref{thm:main_H(2)} we obtain the following.
        	\begin{cor}\label{cor:main_H(2)}
        		For any discriminant $D > 4$ with $D \notin \{9,33\}$, the group $G(X)$ can be generated by the action of two affine multitwists in $\Aff(X)$. If $D \in \{9,33\}$, nevertheless, the minimal number of affine multitwists that are needed to generate the group $G(X)$ is three.
        	\end{cor}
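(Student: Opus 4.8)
The upper bound---that two parabolics suffice once $D \notin \{9, 33\}$---should come essentially for free from the proof of \Cref{thm:main_H(2)}. To obtain the ``$\supseteq$'' part of that theorem one must, in a representative surface of each $W_D$, exhibit explicit parabolic affine maps (Dehn multitwists along cylinder decompositions in suitable directions) and compute the permutations they induce on $\{w_1, \dotsc, w_5\}$; it is these permutations that generate $\Dih_4$, $\Dih_5$ or $\Dih_6$. My plan is to read off from that construction that, for all $D$ above some explicit threshold, two transverse cylinder decompositions already produce two parabolics whose permutations generate $G(X)$, and then to settle the finitely many remaining small discriminants (including $D = 5$ and $D = 8$, which fall outside the range of \Cref{thm:main_H(2)}) one by one, using in each case the known prototype surfaces of $W_D$ and a finite search over their cylinder decompositions. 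Exactly two of these small curves, those of discriminant $9$ and $33$, resist such a search, which is what the lower bound must explain.

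For the lower bound the key remark is that a parabolic element of $\SL(X) \cong \Aff(X)$ acts on $\{w_1, \dotsc, w_5\}$ through a \emph{single} permutation, so two parabolics generate only the subgroup $\langle \pi_1, \pi_2 \rangle$ spanned by their two permutations. Write $S \subseteq G(X)$ for the set of permutations induced by parabolic elements. This is a finite union of cyclic subgroups, stable under conjugation by $G(X)$, and it is completely determined by the cusps of $W_D$: each cusp contributes the cyclic group generated by the permutation of its primitive parabolic, together with all $G(X)$-conjugates. For $D = 9$ the curve $W_9$ is arithmetic and realized by an explicit square-tiled surface; for $D = 33$ the curve $W_{33}$ (a union of at most two Teichmüller curves) has only a handful of cusps, which can be enumerated via McMullen's prototypes. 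In either case I would list the cusps, compute the associated parabolic permutations, and thereby pin down $S$ explicitly. The corollary then reduces to the combinatorial assertion that $S$ contains a generating triple of $G(X) \cong \Dih_6 \cong \Sym_3 \times \ZZ/2\ZZ$ but no generating pair; I expect that $S$ consists exactly of the identity, the central involution, and the non-central involutions in one of the two reflection classes of $\Dih_6$, in which case any product of two elements of $S$ has order at most $3$---so no pair generates the group of order $12$---while the central involution together with two distinct reflections does.

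The main obstacle is, I think, the lower bound: one must be certain that the cusp enumeration for $W_9$ and for each component of $W_{33}$ is complete, since a single overlooked cusp could supply the missing generating pair. I would keep this under control by working with one fixed representative surface of each curve and its explicit cylinder decompositions. The secondary difficulty is on the upper-bound side, where the ``two transverse decompositions suffice'' claim has to be made uniform over the infinite families of prototypes in each residue class: one has to check that, as the prototype parameters vary, the two chosen decompositions stay distinct and their parabolics keep generating the full group, and it is precisely in tracking where this uniformity first holds that the exceptional set $\{9, 33\}$ emerges.
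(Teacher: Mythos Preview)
Your overall plan is sound and close to what the paper does, but there is one genuine gap on the upper-bound side that you should be aware of.

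For $D \equiv 0 \pmod 4$ and $D \equiv 5 \pmod 8$ your expectation is correct: the horizontal and vertical twists on a reduced prototype $\modelP_D(e)$ already induce permutations generating $\Dih_4$ or $\Dih_5$, and this is exactly what \Cref{prop:D=0 mod4,prop:D=5 mod8} show (note that those propositions are stated for all $D > 4$, so $D = 5$ and $D = 8$ require no separate treatment).

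The gap is in the case $D \equiv 1 \pmod 8$. Here the horizontal and vertical twists on any reduced prototype give $r_{\mathrm{h}}, r_{\mathrm{v}} \in \frac{\mathrm{even}}{\mathrm{odd}}$, so each induces only a \emph{single} transposition; together they generate only $\Sym_3$, not $\Dih_6$ (\Cref{prop:D=1 mod8}). Thus ``two transverse cylinder decompositions already produce two parabolics whose permutations generate $G(X)$'' is false if you stick to the horizontal/vertical pair on reduced prototypes. What the paper actually does is pass to a suitable \emph{non-reduced} prototype (\Cref{prop:proper}) to find a parabolic whose induced permutation is a product $(r\;s)(4\;5)$ of two disjoint transpositions; combining this with the two transpositions from \Cref{prop:D=1 mod8} gives three parabolics generating $\Dih_6$, and only then does one argue that one of the three is redundant: any transposition $\sigma \in \{(1\;2),(1\;3),(2\;3)\}$ with $\sigma \neq (r\;s)$ together with $(r\;s)(4\;5)$ already generates $\Sym(\{1,2,3\}) \times \Sym(\{4,5\})$. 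Your plan as written does not anticipate this two-step argument.

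Your lower-bound analysis for $D \in \{9,33\}$ is correct and matches the paper. The key fact (\Cref{rem:D=9/33}) is that an exhaustive search over all splitting prototypes shows $r_0 \notin \frac{\mathrm{odd}}{\mathrm{odd}}$ for every two-cylinder cusp, while one-cylinder cusps automatically induce at most a transposition; hence every parabolic permutation is a transposition or the identity. Your description of $S$ as $\{e, (4\;5)\}$ together with one reflection class $\{(1\;2),(1\;3),(2\;3)\}$ is exactly right, and your order-of-product argument correctly shows that no two such elements can generate a group of order $12$. The paper phrases this last step more tersely (``no pair of transpositions can generate $\Sym(\{1,2,3\}) \times \Sym(\{4,5\})$''), but the content is the same.
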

	
	\subsection*{Related results}
        Periodic points arise naturally in the study of Veech surfaces. One example of this fact is that the Teichmüller curve induced by the $\SL(2, \RR)$-orbit of $X$ is non-arithmetic if and only if $X$ possesses finitely many periodic points \cite{Gutkin-Judge,Gutkin-Hubert-Schmidt}.
        They also naturally arise in fundamental problems from billiard dynamics, such as the finite blocking and the illumination problems. See for example the seminal works of Monteil~\cite{Monteil} and Lelièvre--Monteil--Weiss~\cite{Lelievre-Monteil-Weiss} that describe these connections.
        
        Furthermore, since the classical notion of periodic point is not suitable for a non-Veech surface $X$ (as $\Aff(X)$ may be trivial), it was extended for such surfaces by Apisa~\cite{Apisa:marked}. From the work of Eskin--Filip--A.~Wright~\cite{Eskin-Filip-Wright}, one can deduce the corresponding finiteness result for any non-arithmetic translation surface.

        Recently, there has been a flourishing interest in computing the exact set of periodic points.
        In fact, the finite set of periodic points for some known non-arithmetic surfaces have been computed by Möller~\cite{Moeller}, for Veech surfaces in genus~2; by Apisa~\cite{Apisa:marked,Apisa}, for non-arithmetic Prym eigenforms in genus~2, and for generic surfaces in any component of a strata of translation surfaces; by Apisa--Saavedra--Zhang~\cite{Apisa-Saavedra-Zhang}, for regular $2n$-gons and double $(2n+1)$-gons; and by B.~Wright~\cite{Wright:Ward-Veech}, for some Ward Veech surfaces. They all correspond to the set of fixed points of some (affine) involution (with derivative $-\mathrm{Id}$).

        Our results complement the knowledge of periodic points and their dynamics in the case of Veech surfaces in genus two and allows to give more precise information on related problems as, for example, the illumination and finite blocking problems.
        Our techniques may be extended to study the permutation groups in the other known examples.

	\subsection*{Further work}
        Veech surfaces in $\H(2)$ naturally form part of a larger family of Veech surfaces, first discovered and studied by McMullen~\cite{McMullen:Prym} and then classified by Lanneau--Nguyen~\cite{Lanneau-Nguyen:H4,Lanneau-Nguyen:H6}, known as \emph{Weierstrass Prym eigenforms}.
        Very recently, Freedman~\cite{Freedman} classified the periodic points on primitive Weierstrass Prym eigenforms, showing that there are no more such points than previously expected.
        In a forthcoming paper~\cite{Gutierrez-Romo--Pardo}, we adapt the ideas of the present work and classify the permutation group induced by the affine group on their periodic points in a similar fashion.
	
    \subsection*{Strategy of the proof}
        McMullen's classification~\cite{McMullen:discriminant_spin} of Teichmüller curves in $\H(2)$ relies on a combinatorial description of two-cylinder cusps, called \emph{splitting prototypes}.
        This gives rise to \emph{prototypical surfaces} on each orbit of a Veech surface. Since, for any Veech surface $X$, the permutation groups $G(X)$ and $G(g \cdot X)$ are conjugate for every $g \in \SL(2, \RR)$, it is enough to compute $G(X)$ for prototypical surfaces.
        
        Given a prototypical surface $P$, we start by computing the action of some parabolic elements of $\SL(P)$ on its regular Weierstrass points. This shows that a specific dihedral group $H$, depending only on the discriminant, is contained inside $G(P)$. Some cases where $P$ has small discriminant, namely $9$ or $33$, have to be treated separately, as the general argument is insufficient in these particular cases.
        
        Then, we use geometric restrictions imposed by the locations of the Weierstrass points, and algebraic restrictions imposed by the coefficients of $\SL(P)$, to show that, in fact, $G(P)$ is a subgroup of the dihedral group $H$.
        In the arithmetic case, these restrictions are those given by the HLK-invariant. In the non-arithmetic case, we are able to extend the HLK-invariant when the discriminant $D$ is a quadratic residue modulo $8$ and obtain the desired reverse inclusion in this way.
        For the remaining case of $D \modn{8} 5$, we show the reverse containment by analysing the same geometric and algebraic restrictions. However, we do \emph{not} find any natural extension of the HKL-invariant for this latter case.
        
        By combining both inclusions, we conclude the desired results proving \Cref{thm:main_H(2)} and \Cref{cor:main_H(2)}.

    \subsection*{Structure of the paper}
        In \Cref{sec:background}, we introduce the general background necessary to formulate and prove our results. 
        In \Cref{sec:preliminaires_H(2)}, we recall McMullen's classification of Teichmüller curves in $\H(2)$, and introduce splitting prototypes and prototypical surfaces. \Cref{sec:parabolic_H(2)} is devoted to the study of the action of parabolic elements on regular Weierstrass points: we compute their action on prototypical surfaces, showing that the permutation group given by action of the Veech group on regular Weierstrass points contains a subgroup isomorphic to an appropriate dihedral group. In \Cref{sec:upper-bounds_H(2)}, we compute the corresponding reverse inclusions; this section is divided into several subsections. Indeed, in \Cref{sec:arithmetic_H(2)} we study the arithmetic case and compute the distribution of regular Weierstrass points over $2$-torsion in the unit torus, giving the corresponding restrictions for the action on regular Weierstrass points.
        In \Cref{sec:non-arithmetic_H(2)}, we study the non-arithmetic case; the case of quadratic residues modulo~$8$ is treated in \Cref{sec:quadratic-residues_H(2)} and, in \Cref{sec:quadratic-nonresidues_H(2)}, we study the case of $D \modn{8} 5$.
        
        For some small discriminants the general arguments are insufficient; we treat these exceptional cases separately in \Cref{sec:remaining_H(2)}.
        Finally, in \Cref{sec:proof_H(2)}, we summarise our results and give a proof of \Cref{thm:main_H(2)} and \Cref{cor:main_H(2)}.
    
	\subsection*{Acknowledgements}
	    We are grateful to Vincent Delecroix and Carlos Matheus for their interesting comments, which motivated some parts of this work.
	
\section{Background}
\label{sec:background}
	A \emph{translation surface} is a pair $(X, \omega)$, where $X$ is a Riemann surface and $\omega$ is a nonzero Abelian differential on $X$. If $\omega(p) = 0$ for $p \in X$, we say that $p$ is a \emph{zero} or \emph{singularity}. The \emph{order} of $p$ is the order of $p$ viewed as a zero of $\omega$. On the other hand, if $\omega(p) \neq 0$, we say that $p$ is a \emph{regular point}. By a slight abuse of notation, we will often omit the differential and say that $X$ is a translation surface. For an introduction and more details on this subject, we refer the reader to Zorich's survey~\cite{Zorich:survey}.
	
	The moduli space of translation surfaces is stratified by the list of orders of zeros of the Abelian differential: the \emph{stratum} $\H(\kappa_1, \dotsc, \kappa_n)$ consists of translation surfaces whose singularities have prescribed orders $\kappa_1, \dotsc, \kappa_n$ (with multiplicities). The genus $g \geq 1$ of $X$ can be deduced from the combinatorial data by the Gauss--Bonnet-like formula $\kappa_1 + \dotsb + \kappa_n = 2g - 2$. Strata are endowed with the natural topology coming from period coordinates \cite[Section~3]{Zorich:survey}.

	A translation surface may also be defined as an equivalence class of collections of polygons in $\CC$ in which every side is identified with another parallel, equal-length, opposite side by translation. The equivalence relation is the one induced by \emph{cut-and-paste operations}: two such collections represent the same translation surface if it is possible to cut pieces of the former along straight lines and glue them using the side identifications in order to obtain the latter. This polygon definition induces a \emph{flat metric} on the translation surface which becomes singular at the zeros. Indeed, an order-$k$ zero of the Abelian differential corresponds to a singular point for the flat metric, with cone angle $2\pi(1 + k)$.
	
	There exists a natural action of $\SL(2, \RR)$ on a stratum given by linearly deforming the collection of polygons (by using $\CC \cong \RR^2$). With the first definition of translation surface, this action is exactly post-composition with coordinate charts (again using $\CC \cong \RR^2$). The $\SL(2, \RR)$-action preserves both the stratum in which a translation surface $X$ lies and the total area of $X$. Hence, it induces an action on the set $\H^{(r)}(\kappa) \subseteq \H(\kappa)$ of area-$r$ translation surfaces inside a given stratum $\H(\kappa)$, for some fixed $r > 0$.
	
	Let $X \in \H^{(r)}(\kappa)$ be an area-$r$ translation surface. Regarding its orbit by the $\SL(2, \RR)$-action, two extreme cases can arise:
	\begin{itemize}
		\item $X$ is a \emph{Veech surface} (or simply \emph{Veech}), that is, its $\SL(2, \RR)$-orbit is closed inside $\H^{(r)}(\kappa)$; or
		\item $X$ is \emph{generic}, that is, its $\SL(2, \RR)$-orbit is dense in $\mathcal{C}$, where $\mathcal{C}$ is the connected component of $\H^{(r)}(\kappa)$ containing $X$.
	\end{itemize}
	
	The stabiliser $\SL(X)$ of $X$ for the $\SL(2, \RR)$-action is called the \emph{Veech group} of $X$. We have that $X$ is a Veech surface if and only if $\SL(X)$ is a lattice inside $\SL(2, \RR)$, that is, $\SL(2, \RR)/\SL(X)$ has finite measure\footnote{Veech surfaces were originally defined as those whose Veech group is a lattice. The fact that this is indeed equivalent to having a closed $\SL(2,\RR)$-orbit is known as \emph{Smillie's theorem}, since it was originally proved, though not published, by John Smillie; it was first referenced in Veech's work \cite{Veech:Smillie}.}. For this reason, Veech surfaces are also often called \emph{lattice surfaces}.
	
	The projection of the $\SL(2, \RR)$-orbit of a Veech surface into the moduli space of Riemann surfaces is called a \emph{Teichmüller curve}. They come in two different flavours: 
	\begin{itemize}
		\item the \emph{arithmetic} ones, which arise as the projection of the $\SL(2, \RR)$-orbit of a \emph{square-tiled surface}, that is, a translation surface that admits a covering of the unit torus branched over a single point; and
		\item \emph{non-arithmetic} ones, which cannot be defined in terms of a square-tiled surface.
	\end{itemize}
	The distinction between arithmetic and non-arithmetic Teichmüller curves is important, as different tools are usually used for their study. Indeed, square-tiled surfaces admit purely combinatorial descriptions, unlike those belonging to non-arithmetic Teichmüller curves.
	
	The $\SL(2, \RR)$-action can be thought of as part of a larger action by $\GL^+(2, \RR)$ on a stratum; the main difference being that the latter does not necessarily preserve area. Nevertheless, scaling a translation surface will not change the aforementioned properties (such as being Veech or generic). More precisely, for any $r, r' > 0$, there exists a homeomorphism between the spaces $\H^{(r)}(\kappa)$ and $\H^{(r')}(\kappa)$ simply given by the action of a constant multiple of the identity matrix in $\GL^+(2, \RR)$. As these matrices commute with any other matrix, the homeomorphism is $\SL(2, \RR)$-equivariant. We will sometimes make use of such scalings to simplify some computations.
	
	\subsection*{Affine transformations}
	    Let $X$ be a Veech surface. We say that a bijective, orientation-preserving map $f\colon X \to X$ is an \emph{affine transformation} if its local expressions (on the charts given by the flat metric) are affine maps of $\RR^2$. We denote the group of affine transformations on $X$ by $\Aff(X)$.
	
    	The linear part of an affine transformation does not depend on the choice of flat charts. This observation gives rise to a group homomorphism $D \colon \Aff(X) \to \SL(2,\RR)$, known as the \emph{derivative map}. 
    	The \emph{Veech group} $\SL(X)$ of $X$ can be defined as the image of the derivative map, that is, $\SL(X) = D\Aff(X)$. Veech showed that the derivative map is finite-to-one; the kernel of this map is hence a finite group, which is known as the automorphism group $\Aut(X)$ of $X$. In other words, we have a short exact sequence
    	\[
    	    1 \to \Aut(X) \to \Aff(X) \to \SL(X) \to 1.
    	\]
    	
    	We are particularly interested in the case where $X$ has a single zero (that is, when $X$ belongs to a so-called \emph{minimal stratum}). It is known that, in this case, the group $\Aut(X)$ is trivial, so $D$ is one-to-one \cite[Proposition~4.4]{Hubert-Lelievre}.
        In what follows, in a slight abuse of notation, we will implicitly make use of the identification $\SL(X) \cong \Aff(X)$ for translation surfaces possessing a single zero.
	
	\subsection*{Periodic points}
    	We say that a point $x \in X$ is \emph{periodic} if its $\Aff(X)$-orbit is finite.
    	
    	In the case of $\H(2)$, periodic points in non-arithmetic Veech surfaces were proven to coincide with Weierstrass points by Möller~\cite{Moeller}.
    	
    	The main goal of this article is to study the $\Aff(X)$-action on Weierstrass points of Veech surfaces in $\H(2)$.

	\subsection*{Closed orbits in \texorpdfstring{$\H(2)$}{H(2)}}
    	We are interested in translation surfaces $X$ in the minimal stratum $\H(2)$ in genus two.
    	Every such surface is \emph{hyperelliptic}, that is, it admits an involution $\iota \colon X \to X$ satisfying $\iota^* \omega = -\omega$ and such that $X/\iota$ is (topologically) a sphere. This involution, known as the \emph{hyperelliptic involution}, has exactly six fixed points: the order-$2$ zero of $\omega$ and five regular points $w_1, \dotsc, w_5$. Collectively, these six points are known as the \emph{Weierstrass points} of $X$. Since this set of points must be permuted by the action of $\Aff(X)$, Weierstrass points are periodic. Moreover, Möller showed that no other periodic points exist for these surfaces when they induce non-arithmetic Teichmüller curves, that is, every periodic point is either a Weierstrass point or the order-2 zero in the non-arithmetic case \cite{Moeller}.
    	
    	By the work of McMullen~\cite{McMullen:classification}, every surface in $\H(2)$ is either Veech or generic. Moreover, McMullen \cite{McMullen:discriminant_spin}
    	also showed that Teichmüller curves coming from closed $\SL(2, \RR)$-orbits in $\H(2)$
    	are classified by two invariants: a positive integer $D > 4$ satisfying $D \modn{4} 0$ or $D \modn{4} 1$, known as the \emph{discriminant}, and, when $D \modn{8} 1$ and $D \neq 9$, the \emph{spin invariant}, which can be either even or odd. The \emph{discriminant} of a Veech surface is the discriminant of the Teichmüller curve that it induces.
	
	\subsection*{The action of the Veech group on Weierstrass points in \texorpdfstring{$\H(2)$}{H(2)}}
	    Let $X$ be a translation surface in $\H(2)$; we are interested in the action of $\SL(X)$ on regular Weierstrass points. More precisely, if $X \in \H(2)$, the group $\SL(X)$ acts by permutations on the set $\{w_1, \dotsc, w_5\}$. The permutation group $G(X) \leqslant \Sym(\{w_1, \dotsc, w_5\})$ is the group obtained by all such permutations. Observe that this group is only well-defined up to conjugation.
        In other words, we do not keep track the ``names'' of the regular Weierstrass points.

\section{Teichmüller curves in \texorpdfstring{$\H(2)$}{H(2)}}
\label{sec:preliminaires_H(2)}
	In this background section we recall McMullen's classification of Teichmüller curves in $\H(2)$ and \emph{splitting prototypes}, which describe two-cylinder cusps.
    
    \subsection{Classification of Teichmüller curves in \texorpdfstring{$\H(2)$}{H(2)}}
	Fix a Riemann surface $X$. Recall that $H^{1,0}(X)$ is the complex vector space of Abelian differentials on $X$. As a real vector space, $H^{1,0}(X)$ is canonically isomorphic to $H^1(X; \RR)$ via the isomorphism $\omega \mapsto \Re(\omega)$. This isomorphism also endows $H^{1,0}(X)$ with a symplectic form by pulling back the one on $H^1(X; \RR)$.
	
	The \emph{Jacobian} of $X$, denoted by $\Jac(X)$, is the complex torus $H^{1,0}(X)^*/H_1(X; \ZZ)$ endowed with the symplectic form on $\mathop{T_0} \Jac(X) = H^{1,0}(X)^*$ induced by the symplectic form on $H^{1,0}(X)$. An \emph{endomorphism} of $\Jac(X)$ is an endomorphism of the complex torus. By the isomorphism $H^{1,0}(X) \cong H^1(X; \RR)$, we can think of an endomorphism of $\Jac(X)$ as an endomorphism of $H_1(X; \ZZ)$ whose real-linear extension to $H^{1,0}(X)^*$ is complex-linear. An endomorphism of $\Jac(X)$ is said to be \emph{self-adjoint} if it is self-adjoint with respect to the symplectic form on $\mathop{T_0} \Jac(X)$.
	
	Fix an integer $D > 0$ satisfying $D \modn{4} 0$ or $D \modn{4} 1$. Such $D$ is known as a \emph{real quadratic discriminant}. Let $\O_D$ be the \emph{real quadratic order} of discriminant $D$, that is,
	\[
	    \O_D = \ZZ[\lambda] \cong \ZZ[T]/(T^2 - eT - b c),
    \]
	where $b$, $c$ and $e$ are integers satisfying $D=e^2 + 4bc$ and $\lambda = \frac{e + \sqrt{D}}{2}$.
	Note that if $D$ is odd and square-free, or if $D$ is even and $D/4$ is square-free, then $\O_D$ is the \emph{maximal} order inside $\QQ(\sqrt{D})$ and coincides with its ring of integers.
	 
	We say that an Abelian variety $A$ admits real multiplication by $\O_D$ if $\dim_\CC A = 2$ and $\O_D$ occurs as an indivisible, self-adjoint subring of $\End(A)$.
		
	With these definitions in mind, we can now define the loci in which we are interested:
	\begin{defn}
		The \emph{Weierstrass curve} $W_D$ of discriminant $D$ is the locus of Riemann surfaces $X \in \M_2$ such that:
		\begin{itemize}
			\item $\Jac(X)$ admits real multiplication by $\O_D$; and
			\item there exists an Abelian differential $\omega$ on $X$ such that:
			\begin{itemize}
				\item $(X, \omega) \in \H(2)$; and
				\item $\omega$ is an eigenform, that is, $\O_D \cdot \omega \subseteq \CC \cdot \omega$.
			\end{itemize}
		\end{itemize}
	\end{defn}
	
	By the work of
	McMullen~\cite{McMullen:discriminant_spin} (see also the articles by Calta \cite{Calta} and Hubert--Lelièvre \cite{Hubert-Lelievre}), we have the following:
	\begin{thm}
		Every irreducible component of a Weierstrass curve is a Teichmüller curve. Conversely, any Teichmüller curve coming from a translation surface in $\H(2)$ is an irreducible component of some Weierstrass curve.
		
		Let $D$ be a real quadratic discriminant. If $D \leq 4$, then the Weierstrass curve $W_D$ is empty. On the other hand, if $D \geq 5$, then $W_D$ is irreducible if $D \nmodn{8} 1$ or if $D = 9$ and, otherwise, $W_D$ can be decomposed as $W_D^0 \sqcup W_D^1$, where $W_D^0$ and $W_D^1$ are irreducible and distinguished by the spin invariant.
	\end{thm}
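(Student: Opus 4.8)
The statement bundles three essentially independent assertions, and the plan is to prove each in turn, drawing on the real‑multiplication technology of \cite{McMullen:discriminant_spin} and its antecedents \cite{Calta,Hubert-Lelievre}. \emph{First, that every component of $W_D$ is a Teichmüller curve.} I would work on the eigenform locus $\Omega W_D \subseteq \H(2)$, the set of pairs $(X,\omega)$ with $X \in W_D$ and $\omega$ an eigenform realising this. Two facts drive the argument. Both defining conditions are algebraic and closed: admitting real multiplication by $\O_D$ amounts to the existence of a self‑adjoint endomorphism of $\Jac(X)$ with prescribed characteristic polynomial, cut out by polynomial equations on the period matrix, and $\O_D\cdot\omega \subseteq \CC\cdot\omega$ is likewise algebraic; and both conditions are $\GL^+(2,\RR)$‑invariant, since the $\SL(2,\RR)$‑action transports $H_1(X;\ZZ)$ and the period coordinates compatibly, hence preserves $\End(\Jac X)$ together with the eigenform property. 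To see that each component is a single orbit I would count dimensions: the locus of genus‑two Jacobians with real multiplication by $\O_D$ is the two‑dimensional image of a Hilbert modular surface, on which the eigenform — determined up to scale and a choice of real embedding — has a zero divisor whose coalescence into a double zero is one further algebraic condition, so $W_D$ is a curve and $\dim_\RR \Omega W_D = 4$ (a punctured complex line of scalings over each point of the curve). A $\GL^+(2,\RR)$‑orbit in $\H(2)$ is also $4$‑real‑dimensional, the stabiliser being the discrete Veech group, so each orbit is open in the smooth locus of $\Omega W_D$; as $\Omega W_D$ is an algebraic variety with finitely many components, each component is the closure of a single orbit, whose image in $\M_2$ is by definition a Teichmüller curve.

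\emph{Next, the converse: a Teichmüller curve in $\H(2)$ is a component of some $W_D$.} Given such a curve, generated by $(X,\omega)\in\H(2)$, I would examine the variation of Hodge structure obtained by restricting the rank‑$4$ Hodge bundle to the curve. It splits symplectically as $L \oplus M$, with $L$ the tautological rank‑$2$ summand spanned by $\Re\omega$ and $\Im\omega$ (carrying the uniformising $\SL(2,\RR)$‑structure) and $M = L^{\perp}$ of rank $2$. This is the step I expect to be the main obstacle: one must produce a nontrivial self‑adjoint endomorphism from $M$, which requires McMullen's analysis of the second fundamental form (the Kodaira--Spencer map) of the splitting — the presence of a double zero makes $\omega$ extremal and saturates a Hodge‑theoretic inequality, forcing $M$ to be a sub‑variation over which the monodromy is as small as possible. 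From the resulting structure one extracts a self‑adjoint endomorphism $T$ of $H_1(X;\ZZ)$; since $\End(\Jac X)$ is an order in its endomorphism algebra, $\ZZ[T] = \O_D$ for some discriminant $D$, $\omega$ is by construction an eigenform, and the double zero persists along the whole curve, so it is a component of $W_D$.

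\emph{Finally, the component count and the emptiness for $D \le 4$.} With the previous two parts, $W_D$ is a finite union of Teichmüller curves, and I would identify its components combinatorially: by McMullen's description of two‑cylinder cusps, the cusps of each component correspond to splitting prototypes of discriminant $D$, and two prototypes lie on the same component precisely when joined by a chain of butterfly moves. It then remains to show that butterfly moves preserve a $\ZZ/2$‑valued spin parity attached to each prototype, that any two prototypes of the same spin are connected by such a chain when $D \modn{8} 1$ and $D \ne 9$, and that all prototypes are connected (the spin obstruction disappearing) when $D \nmodn{8} 1$ or $D = 9$; this yields two components separated by spin in the first case and one component in the others. For $D \le 4$, that is $D \in \{1,4\}$, the Weierstrass curve is empty — part of McMullen's classification — which is visible from the fact that the list of splitting prototypes of discriminant $D \le 4$ is empty, reflecting that no surface in $\H(2)$ has small enough combinatorial complexity.
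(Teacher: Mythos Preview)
The paper does not prove this theorem; it is quoted as a background result due to McMullen~\cite{McMullen:discriminant_spin} (with auxiliary references to Calta~\cite{Calta} and Hubert--Leli\`evre~\cite{Hubert-Lelievre}) and is used without further justification. So there is no ``paper's own proof'' to compare against.

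That said, your sketch is a fair high-level outline of how McMullen actually argues: $\GL^+(2,\RR)$-invariance and closedness of the eigenform condition plus a dimension count for the first part; the Hodge-theoretic splitting of the flat bundle over a Teichm\"uller curve to produce the real-multiplication endomorphism for the converse; and the combinatorics of splitting prototypes and butterfly moves, together with the spin parity, for the component count. One small gap worth flagging: your argument that $W_D = \emptyset$ for $D \le 4$ via the absence of splitting prototypes only rules out two-cylinder cusps, not one-cylinder ones (nor surfaces without periodic directions, though these do not occur for Veech surfaces); a direct argument, or an appeal to the full cusp classification, is still needed there. But since the present paper treats the theorem as a black box, this is a matter between you and \cite{McMullen:discriminant_spin} rather than a discrepancy with the paper.
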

	
	For the definition of the spin invariant, we refer the reader to McMullen's original work \cite[Section~5]{McMullen:discriminant_spin}.
	
    Given a real quadratic discriminant $D$, the Weierstrass curve $W_D$ is a locus in the moduli space of \emph{Riemann} surfaces. As we will work with \emph{translation} surfaces, we define $\Omega W_D$ as the locus of translation surfaces in $\H(2)$ projecting down onto $W_D$. We have that each $\Omega W_D$ consists entirely on Veech surfaces. We similarly define $\Omega W_D^{\varepsilon}$ for $\varepsilon \in \{0, 1\}$ when $D > 9$ and $D \modn{8} 1$.
	
	\subsection{Prototypes}
    Following McMullen's classification \cite{McMullen:discriminant_spin}, we say that a quadruple of integers $(a, b, c, e)$ is a \emph{splitting prototype} of discriminant $D$, if
        \begin{gather*}
        D = e^2 + 4 bc, \quad
        0 \leq a < \gcd(b,c), \quad
    	c + e < b, \\
    	0 < b,c
    	\quad \text{ and } \quad
    	\gcd(a,b,c,e) = 1.
    \end{gather*}
    
    With each splitting prototype $(a, b, c, e)$, we associate a translation surface as in \Cref{fig:prototypical}, where $\lambda = \frac{e + \sqrt{D}}{2}$. We call such surface the \emph{prototypical surface} associated with the prototype $(a, b, c, e)$ and denote it by $\modelP(a,b,c,d) \in \Omega W_D$.
    
    \begin{figure}[t!]
        \centering
        \includegraphics[scale=0.8]{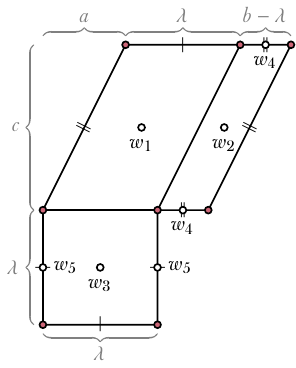}
        \caption{Prototypical surface $\modelP(a, b, c, e)$, with $\lambda = \frac{e + \sqrt{D}}{2}$. This example corresponds to $\modelP(2, 4, 4, -3)$ for $D = 73$.}
        \label{fig:prototypical}
    \end{figure}
    
    Splitting prototypes are in a one-to-one correspondence with two-cylinder cusps in $W_D$ \cite[Theorem~4.1]{McMullen:discriminant_spin} (see~\Cref{sec:parabolic-action} for more details on cylinder decompositions).
    In particular, any two cylinder decomposition of a translation surface in $\Omega W_D$ corresponds to a splitting prototype of discriminant $D$ and can be turned into a surface as in \Cref{fig:prototypical}, up to a rotation (that maps the two-cylinder direction to the horizontal direction) and the action of a diagonal matrix in $\GL^+(2,\RR)$ (in order to match the side lengths).
    We use this fact in \Cref{sec:parabolic_H(2)} to understand how parabolic elements induced by two-cylinder decompositions act on Weierstrass points.
    
    \subsubsection{The spin invariant}
    The spin invariant can be used to distinguish the two components of $\Omega W_D$ when $D > 9$ and $D \modn{8} 1$. McMullen showed \cite[Theorem~5.3]{McMullen:discriminant_spin} that the spin invariant of a splitting prototype $(a, b, c, e)$ of discriminant $D \modn{8} 1$ is given by the parity of
    \begin{equation}\label{eq:spin}
        \frac{e-f}{2} +(c+1)(a+b+ab),
    \end{equation}
    where $f > 0$ is the conductor of $\O_D$, that is, the index of $\O_D$ inside the maximal order of $\O_D\otimes \QQ \cong \QQ(\sqrt{D})$.
    In particular, when $D$ is odd, we have $D = Ef^2$ with $E$ square-free.
    
    \subsubsection{Reduced prototypes}
    A splitting prototype that has the form
    \[
        (a,b,c,e) = \left(0,\frac{D-e^2}{4},1,e\right),
    \]
    that is, such that $c=1$, is called a \emph{reduced prototype} \cite[Sections~8 and 10]{McMullen:discriminant_spin}. Given a real quadratic discriminant $D > 4$, reduced prototypes are parametrised only by $e$, where $e \in \ZZ$ satisfies
    \[
        e \modn{2} D, \quad e^2 < D \quad \text{ and } \quad (e + 2)^2 < D.
    \]
    Observe that the last two inequalities are equivalent to  $0 < \lambda < b$, where $\lambda = \frac{e + \sqrt{D}}{2}$, and can also be restated as $-\sqrt{D} < e < \sqrt{D} - 2$.
    Moreover, when $D \modn{8} 1$ and $D > 9$, \Cref{eq:spin} shows that the spin of the prototypical surface $P\left(0,\frac{D - e^2}{4},1,e\right)$ is given by the parity of $\frac{e-f}{2}$, where $f$ is the conductor of $\O_D$. Thus, it is equal to $0$ if $e \modn{4} f$, and it is equal to $1$ if $e \nmodn{4} f$.
    
    We denote the prototypical surface associated with the reduced prototype $e$ by $\modelP_D(e)$, that is, $\modelP_D(e) = \modelP\left(0,\frac{D-e^2}{4},1,e\right) \in \Omega W_D$, as in the left of \Cref{fig:Le}.
    
    Let $R_D$ be the set of $e$-parameters of reduced prototypes of discriminant $D$ and, for $D$ satisfying $D \modn{8} 1$ and $D > 9$, let $R_D^{\varepsilon}$ be the set of $e$-parameters of reduced prototypes corresponding to surfaces in $\Omega W_D^{\varepsilon}$, for $\varepsilon \in \ZZ/2\ZZ$.
    That is, for any discriminant $D > 4$, let
    \[
        R_D = \{e \in \ZZ \ \mathbin{|}\  e \modn{2} D, e^2 < D, (e+2)^2 < D\},
    \]
    and, for $D \modn{8} 1$ and $D > 9$, let
    \[
        R_D^0 = \{e \in R_D \ \mathbin{|}\  e \modn{4} f\}
        \qquad \text{ and } \qquad
        R_D^1 = \{e \in R_D \ \mathbin{|}\  e \nmodn{4} f\},
    \]
    where $f$ is the conductor of $\O_D$, that is, the largest positive integer whose square divides $D$. Since, for fixed $D$, there is a one-to-one correspondence between reduced prototypes and their $e$-parameters, we will refer to $e \in R_D$ as a reduced prototype.
    
    \subsubsection{L-shaped representatives}
    We also use, in \Cref{sec:upper-bounds_H(2)}, an L-shaped version of the surface associated with a reduced prototype. This surface is obtained by the action of the diagonal matrix $\left(\begin{smallmatrix}\lambda^{-1} & 0 \\ 0 & 1 \end{smallmatrix}\right) \in \GL^+(2, \RR)$ on $\modelP_D(e)$, with $e \in R_D$, as in \Cref{fig:Le}. We denote such surface by $\modelL_D(e) \in \Omega W_D$.
    
    \begin{figure}[t!]
        \centering
        \includegraphics[scale=0.76]{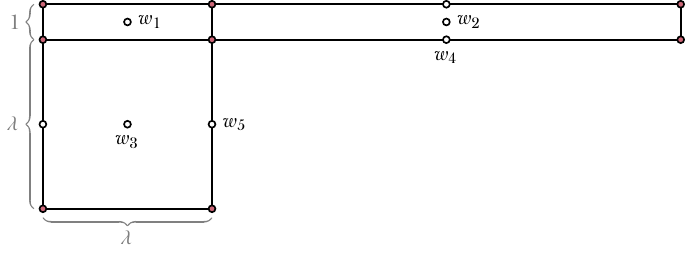}
        \raisebox{1.6cm}{$\xrightarrow{\;\left(\begin{smallmatrix} \lambda^{-1} & 0 \\ 0 & 1 \end{smallmatrix}\right)\;}$}
        \includegraphics[scale=0.76]{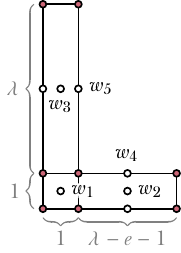}
        
        \caption{Prototypical surface $\modelP_D(e)$ (left) and L-shaped surface $\modelL_D(e)$ (right), for a reduced prototype $e \in R_D$. This example shows $\modelP_{73}(1)$ and $\modelL_{73}(1)$.}
        \label{fig:Le}
    \end{figure}

    \section{Action of parabolic elements}
    \label{sec:parabolic_H(2)}
 
    In this section, we show that the permutation group induced by the action of \emph{parabolic} elements of the Veech group contains a dihedral group.
    More precisely, using the reduced prototypes $\modelP_D(e)$, we compute a subgroup of $G_D(e) = G(\modelP_D(e))$, generated by the action of certain parabolic elements in the Veech group of $\modelP_D(e)$.

    \subsection{Parabolic elements and cylinder decompositions}\label{sec:parabolic-action}
    In what follows, we assume that a translation surface $(X, \omega)$ is endowed with the flat metric coming from $\omega$. If $(X, \omega)$ is viewed as a set of polygons in $\CC$ with side identifications, then a geodesic that does not contain any singularities in its interior is simply a straight line that respects said side identifications.
    
    A \emph{cylinder} of \emph{circumference} $w > 0$ and \emph{height} $h > 0$ on $X$ is a maximal open annulus filled by isotopic simple closed regular geodesics, isometric to $\RR/w\ZZ \times (0,h)$.
    Its \emph{modulus} is the ratio between its circumference $w$ and its height $h$, that is, $m = \frac{w}{h} > 0$.
    The \emph{core curve} of a cylinder is the simple closed geodesic identified with $\RR/w\ZZ \times \{h/2\}$.
    A \emph{saddle connection} is a geodesic joining two different singularities or a singularity to itself, with no singularities in its interior. Cylinders are bounded by parallel (concatenations of) saddle connections.
    
    A \emph{cylinder decomposition} of $X$ is a collection of parallel cylinders whose closures cover $X$. Given $\theta \in [0, 2\pi)$, we say that a cylinder decomposition is \emph{in direction $\theta$} if the angle between the horizontal direction and the core curve of any cylinder (or, equivalently, all cylinders) is $\theta$.
    
    By the work of Veech~\cite{Veech}, cylinder decompositions and parabolic elements are closely related.
    Indeed, fix an angle $\theta \in [0,2\pi)$ and let $R_\theta$ be the matrix rotating the plane (counterclockwise) by $\theta$. Given $t \in \RR$, let $T_t = \left(\begin{smallmatrix} 1 & t \\ 0 & 1 \end{smallmatrix}\right)$ be a matrix twisting the horizontal direction by $t$. Then, a matrix of the form $P_\theta^t = R_\theta T_t R_\theta^{-1}$ belongs to $\SL(X)$ if and only if there exists a cylinder decomposition $\{C_i\}_{i \in I}$ of $X$ in direction $\theta$ such that the moduli $m_i$ of the cylinders $C_i$ are pairwise rationally related, that is, if $r_{ij} = \frac{m_i}{m_j} \in \QQ$ for every $i,j \in I$.
    Furthermore, if $t \in \RR$ is an integer multiple of the modulus of each cylinder, that is, for each $i \in I$ there exist $k_i \in \ZZ$ such that $t = k_im_i$,
    then $P_\theta^t \in \SL(X)$. The corresponding affine transformation preserves the cylinder decomposition, as it twists each cylinder $C_i$ exactly $k_i$ times along itself.
    In particular, $P_\theta^t$ acts on each core curve $\gamma_i$ by a rotation by angle $k_i\pi$ (that is, $k_i$ half-turns). If the $k_i$ are minimal and positive, we call this action ``the'' \emph{twist} in direction $\theta$ and we say that $\theta$ is a \emph{parabolic direction}.
    
    In the case of Veech surfaces, parabolic directions are easily characterised by the so-called \emph{Veech dichotomy}~\cite{Veech}. Indeed, a direction $\theta \in [0, 2\pi)$ is parabolic if and only if there exists a saddle connection or a closed geodesic in direction $\theta$, meaning that the angle between the horizontal and the curve is $\theta$.
    
    \subsection{Cylinder decompositions in \texorpdfstring{$\H(2)$}{H(2)}}\label{sec:cylinder-moduli}
    Let $X \in \H(2)$. It is known that there are at most two cylinders in a cylinder decomposition of $X$. Moreover, the core curve of each cylinder contains exactly two Weierstrass points \cite[Remark~5]{Pardo:non-varying}.
    In particular, a parabolic element $P_\theta^t \in \SL(X)$ exchanges two (different) regular Weierstrass points $x,y \in X$ if and only if 
    \begin{enumerate}
        \item $X$ admits a cylinder decomposition $\{C_i\}_i$ in direction $\theta$;
        \item $x$ and $y$ lie on the same core curve $\gamma_i$ of one of the cylinders $C_i$; and
        \item the corresponding integer $k_i$ of half-turns along $\gamma_i$ is odd (recall that $t = k_i m_i$).
    \end{enumerate}
    
    Let $\theta \in [0, 2\pi)$ and assume that $\{C_1,C_2\}$ is a two-cylinder decomposition of $X$ in direction $\theta$. We can write
    \[
        r = \frac{m_1}{m_2} = \frac{p}{q} \in \QQ,
    \]
    where $p$ and $q$ are positive integers with $\gcd(p,q) = 1$. Then, the twist in direction $\theta$ is described by taking $k_1 = q$ and $k_2 = p$. In other words, if $t = qm_1 = pm_2$, then the parabolic element $P_\theta^t$ belongs to $\SL(X)$, and $t > 0$ is minimal for this property. Since $t$ will be always chosen in this way, we can omit it from the notation and define $r_\theta = \frac{m_1}{m_2} = \frac{p}{q}$.
    
    By the previous discussion, we can compute the action of $P_\theta^t$ at the level of (regular) Weierstrass points, which we denote by $\tau_\theta \in \Sym(\{w_1,\dots,w_5\})$.
    Indeed, $\tau_\theta$ exchanges the two regular Weierstrass points in $\gamma_i$, the core curve of $C_i$, if and only if $k_i$ is odd. Since $\gcd(k_1,k_2) = \gcd(q,p) = 1$, $\tau_\theta$ is either a transposition or a product of two disjoint transpositions.
    
    \noindent\textbf{Notation.}
    In what follows we denote
    \begin{align*}
        \frac{\text{odd}}{\text{odd}} &= \left\{\frac{p}{q} \in \QQ \ \mathbin{\Big|}\  p, q \in \NN, \gcd(p,q) = 1, p \modn{2} q \modn{2} 1\right\}, \\
        \frac{\text{even}}{\text{odd}} &= \left\{\frac{p}{q} \in \QQ \ \mathbin{\Big|}\  p, q \in \NN, \gcd(p,q) = 1, p \modn{2} 0, q \modn{2} 1\right\} \qquad \text{and} \\
        \frac{\text{odd}}{\text{even}} &= \left\{\frac{p}{q} \in \QQ \ \mathbin{\Big|}\  p, q \in \NN, \gcd(p,q) = 1, p \modn{2} 1, q \modn{2} 0\right\}.
    \end{align*}
    
    Moreover, for a two-cylinder decomposition of a surface in $\H(2)$, we always choose $C_1$ to be the cylinder with the largest circumference. We will refer to $C_1$ as the \emph{long cylinder} and, accordingly, to $C_2$ as the \emph{short cylinder}.
    
    We can summarise the previous discussions in the form of the following lemma which we will repeatedly use throughout the article.
    
    \begin{lem}
        \label{lem:summary_twists}
        Let $X$ be a translation surface in $\H(2)$. Let $\theta \in [0, 2\pi)$ be a parabolic direction of $X$ and assume that there exists a two-cylinder decomposition $\{C_1, C_2\}$ of $X$ in direction $\theta \in [0, 2\pi)$. Recall that $r_\theta = \frac{m_1}{m_2}$, where $m_i$ is the modulus of the cylinder $C_i$, and that $\tau_\theta \in \Sym(\{w_1, \dotsc, w_5\})$ is the action of the twist in direction $\theta$ on the regular Weierstrass points $w_1, \dotsc, w_5$ of $X$. We have that
        \begin{enumerate}
            \item if $r_\theta \in \frac{\mathrm{odd}}{\mathrm{odd}}$, then $\tau_\theta$ is a product of two disjoint transpositions, exchanging the regular Weierstrass points in the core curves of $C_1$ and $C_2$;
            \item if $r_\theta \in \frac{\mathrm{even}}{\mathrm{odd}}$, then $\tau_\theta$ only exchanges the two regular Weierstrass points in the core curve of the long cylinder $C_1$; and
            \item if $r_\theta \in \frac{\mathrm{odd}}{\mathrm{even}}$, then $\tau_\theta$ only exchanges the two regular Weierstrass points in the core curve of the short cylinder $C_2$.
            \qed
        \end{enumerate}
    \end{lem}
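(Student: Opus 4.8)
The plan is to unwind the definitions assembled in \Cref{sec:parabolic-action} and \Cref{sec:cylinder-moduli}: once one understands, cylinder by cylinder, how the twist permutes the two Weierstrass points lying on each core curve and why it fixes every other Weierstrass point, all three cases drop out by inspecting the parities of $p$ and $q$, where $r_\theta = \frac{m_1}{m_2} = \frac{p}{q}$ in lowest terms.

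First I would locate the five regular Weierstrass points with respect to the decomposition. A core curve is a regular closed geodesic, so it cannot contain the order-$2$ zero; hence the two Weierstrass points on the core curve $\gamma_i$ of $C_i$ given by \cite[Remark~5]{Pardo:non-varying} are both among $w_1, \dotsc, w_5$. Since $\gamma_1$ and $\gamma_2$ are disjoint and each carries exactly two of them, this pins down four distinct regular Weierstrass points, and the zero together with the fifth regular Weierstrass point must lie on the complement of the two open cylinders, that is, on the union of the boundary saddle connections. Next I would observe that the twist is supported in the open cylinders: in flat coordinates adapted to the direction $\theta$, the affine representative of $P_\theta^t$ with $t = k_i m_i$ acts on $C_i \cong \RR/w_i\ZZ \times [0, h_i]$ by $(x, y) \mapsto (x + t y, y)$, which is the identity on both boundary circles $y = 0$ and $y = h_i$. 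Therefore $\tau_\theta$ fixes the zero and the fifth regular Weierstrass point and can only move the four regular Weierstrass points lying on $\gamma_1 \cup \gamma_2$.

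The heart of the argument is the action on a single core curve. The hyperelliptic involution has derivative $-I$, hence preserves the direction-$\theta$ decomposition, and it maps each $C_i$ to itself (it cannot swap $C_1$ and $C_2$, since each $\gamma_i$ carries fixed points of $\iota$); so on $C_i$ it is the central symmetry of the flat cylinder, and its two fixed points on $\gamma_i$ are antipodal, splitting $\gamma_i \cong \RR/w_i\ZZ$ into two arcs of length $w_i/2$. On $\gamma_i$ the twist acts by $x \mapsto x + t h_i / 2 = x + k_i w_i / 2$, that is, by $k_i$ half-turns, so it exchanges this antipodal pair when $k_i$ is odd and fixes both points when $k_i$ is even --- precisely the criterion recorded just before the lemma.

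Finally I would feed in the bookkeeping: from $t = q m_1 = p m_2$ we get $k_1 = q$ and $k_2 = p$, and $C_1$ is the long cylinder by convention. If $r_\theta \in \frac{\mathrm{odd}}{\mathrm{odd}}$, both $k_i$ are odd, so $\tau_\theta$ exchanges both antipodal pairs, a product of two disjoint transpositions since the four points are distinct; if $r_\theta \in \frac{\mathrm{even}}{\mathrm{odd}}$, only $k_1 = q$ is odd, so $\tau_\theta$ exchanges only the pair on the long cylinder $C_1$; and if $r_\theta \in \frac{\mathrm{odd}}{\mathrm{even}}$, only $k_2 = p$ is odd, so $\tau_\theta$ exchanges only the pair on the short cylinder $C_2$. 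Being a summary of facts already in place, the statement presents no genuine obstacle; the only points deserving care are the orientation of the bookkeeping ($k_1 = q$ versus $k_2 = p$) and checking in the first step that the four core-curve Weierstrass points are genuinely distinct, so that ``product of two disjoint transpositions'' is literally accurate.
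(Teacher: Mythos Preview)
Your proposal is correct and follows exactly the approach of the paper: the lemma is stated there with a bare \qed, being explicitly introduced as a summary of the discussion in \Cref{sec:parabolic-action} and \Cref{sec:cylinder-moduli}, and your write-up simply spells out that discussion (location of the Weierstrass points via \cite[Remark~5]{Pardo:non-varying}, the half-turn action on core curves, and the bookkeeping $k_1=q$, $k_2=p$). The extra justifications you add---that the twist is the identity on the boundary saddle connections and that the two core-curve Weierstrass points are antipodal via the hyperelliptic involution---are implicit in the paper's discussion and are the right things to check.
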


\subsection{Horizontal and vertical twists on reduced prototypes} \label{sec:horizontal_vertical_twists_reduced_prototypes}
    Let $D > 0$ be a real quadratic discriminant. Given a splitting prototype $(a,b,c,e)$ of discriminant $D$, the horizontal direction induces a two-cylinder decomposition $\{C_1, C_2\}$ of $\modelP(a,b,c,e)$, where $C_1$ is the long cylinder. Referring to \Cref{fig:prototypical} we see that the moduli $m_i$ of the cylinders $C_i$ are given by
    \[
        m_1 = \frac{b}{c}
        \qquad \text{ and } \qquad
        m_2 = \frac{\lambda}{\lambda} = 1.
    \]
    The ratio of the moduli is then
    \[
        r_{\mathrm{h}} = r_0 = \frac{m_1}{m_2} = \frac{b}{c} = \frac{D-e^2}{4c^2},
    \]
    where the label ``$\mathrm{h}$'' stands for \emph{horizontal}.
    
    Consider now the case of a reduced prototype $e \in R_D$. In this case, we have that $r_{\mathrm{h}} = b = \frac{D - e^2}{4}$. Moreover, the vertical direction induces a two-cylinder decomposition $\{C_1, C_2\}$ of $\modelP_D(e)$, where $C_1$ is the long cylinder. From \Cref{fig:Le} (left) we see that the corresponding moduli are
    \[
        m_1 = \frac{\lambda + 1}{\lambda}
        \qquad \text{ and } \qquad
        m_2 = \frac{1}{b - \lambda} = \frac{1}{\frac{D - e^2}{4} - \lambda}.
    \]
    Since $\lambda = \frac{e + \sqrt{D}}{2}$, it follows from an elementary computation that the ratio of the moduli is
    \[
        r_{\mathrm{v}} = r_{\pi/2} = \frac{m_1}{m_2} = \frac{D - (e + 2)^2}{4} = b - e - 1,
    \]
    where the label ``$\mathrm{v}$'' stands for \emph{vertical}. In particular, we obtain that $r_{\mathrm{v}} = r_{\mathrm{h}} - e - 1$.
    
    We can now summarise this discussion for reduced prototypes:
    \begin{lem}
        \label{lem:summary_twists_reduced_prototypes}
        Let $D > 4$ be a real quadratic discriminant. Let $e \in R_D$ be a reduced prototype and let $b = \frac{D - e^2}{4} \in \NN$. Let $X = \modelP_D(e)$ be the prototypical surface associated with the reduced prototype $(0, b, 1, e)$ and let $w_1, \dotsc, w_5$ be its regular Weierstrass points. We set $\tau_{\mathrm{h}} = \tau_0$ and $\tau_{\mathrm{v}} = \tau_{\pi/2}$, that is, $\tau_{\mathrm{h}}$ and $\tau_{\mathrm{v}}$ are the permutations in $\Sym(\{w_1, \dotsc, w_5\})$ associated with the horizontal and vertical twists, respectively. Then, we have that the permutations $\tau_{\mathrm{h}}$ and $\tau_{\mathrm{v}}$ always exchange the regular Weierstrass points in the long horizontal and long vertical cylinders, respectively. Moreover, $\tau_{\mathrm{h}}$ exchanges the regular Weierstrass points in the short horizontal cylinder if and only if $b$ is odd. Similarly, $\tau_{\mathrm{v}}$ exchanges the regular Weierstrass points in the short vertical cylinder if and only if $b - e - 1$ is odd.
    \end{lem}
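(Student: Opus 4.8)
The plan is to read off the statement from \Cref{lem:summary_twists} applied to $X = \modelP_D(e)$, feeding in the two moduli ratios computed just above. First I would record that the horizontal and vertical directions of $\modelP_D(e)$ genuinely give two-cylinder decompositions $\{C_1, C_2\}$ with $C_1$ the long cylinder (visible in \Cref{fig:Le}), and that both directions are parabolic: the ratios $r_{\mathrm{h}} = b$ and $r_{\mathrm{v}} = b - e - 1$ computed above are positive integers, hence rational, so by the discussion in \Cref{sec:parabolic-action} the associated twists belong to $\SL(X)$. Here positivity of $r_{\mathrm{v}}$ uses the splitting-prototype inequality $c + e < b$ together with $c = 1$.

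For $\tau_{\mathrm{h}}$: since $r_{\mathrm{h}} = b/1$ is already in lowest terms, with $\gcd(b,1) = 1$, and in particular has odd denominator, only cases (1) and (2) of \Cref{lem:summary_twists} can occur. If $b$ is odd, then $r_{\mathrm{h}} \in \frac{\mathrm{odd}}{\mathrm{odd}}$ and case (1) applies, so $\tau_{\mathrm{h}}$ exchanges the two regular Weierstrass points on each of the two core curves. If $b$ is even, then $r_{\mathrm{h}} \in \frac{\mathrm{even}}{\mathrm{odd}}$ and case (2) applies, so $\tau_{\mathrm{h}}$ exchanges only the two regular Weierstrass points on the long horizontal cylinder. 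Combining the two cases gives exactly the asserted behaviour of $\tau_{\mathrm{h}}$: it always exchanges the pair on the long horizontal cylinder, and the pair on the short horizontal cylinder precisely when $b$ is odd.

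For $\tau_{\mathrm{v}}$ the argument is identical with $b$ replaced by $b - e - 1$. Writing $r_{\mathrm{v}} = (b - e - 1)/1$ in lowest terms, which again has odd denominator, \Cref{lem:summary_twists} gives case (1) when $b - e - 1$ is odd (so $\tau_{\mathrm{v}}$ exchanges both pairs) and case (2) when $b - e - 1$ is even (so $\tau_{\mathrm{v}}$ exchanges only the pair on the long vertical cylinder). This yields the claim, with the evident correction that the last occurrence of $\tau_{\mathrm{h}}$ in the statement should read $\tau_{\mathrm{v}}$.

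There is no real difficulty here beyond bookkeeping; the one point requiring care is the convention in \Cref{lem:summary_twists}, namely that when $r_\theta = p/q$ is in lowest terms it is the denominator $q = k_1$ that controls the long cylinder $C_1$ and the numerator $p = k_2$ that controls the short cylinder $C_2$. Once this is kept straight, the lemma follows immediately from the moduli computations already carried out, since in both directions the denominator of the reduced ratio is $1$ and hence always odd.
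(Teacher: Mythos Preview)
Your proposal is correct and follows essentially the same approach as the paper: both observe that $r_{\mathrm{h}} = b$ and $r_{\mathrm{v}} = b - e - 1$ are integers, hence lie in $\frac{\mathrm{odd}}{\mathrm{odd}} \sqcup \frac{\mathrm{even}}{\mathrm{odd}}$, and then read off the conclusion directly from \Cref{lem:summary_twists}. Your version is slightly more explicit about the case analysis and also catches the typo in the statement (the last $\tau_{\mathrm{h}}$ should be $\tau_{\mathrm{v}}$), which the paper does not comment on.
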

    
    \begin{proof}
        From the previous discussion, we have that $r_{\mathrm{h}} = b$ and $r_{\mathrm{v}} = b - e - 1$. Thus, $r_{\mathrm{h}}$ and $r_{\mathrm{v}}$ are integers. In particular, they belong to $\frac{\mathrm{odd}}{\mathrm{odd}} \sqcup \frac{\mathrm{even}}{\mathrm{odd}}$. By \Cref{lem:summary_twists}, the horizontal and vertical twists always exchange the regular Weierstrass points in the corresponding long cylinders. Finally, \Cref{lem:summary_twists} also implies that whether they exchange the regular Weierstrass points in the corresponding short cylinders only depends on the parities of $b$ and $b - e - 1$.
    \end{proof}
    
    \begin{rem}
    \label{rem:butterfly_H(2)}
        We have refrained from introducing the notion of \emph{butterfly moves} to simplify the exposition.
        In fact, several cylinder decompositions on a prototypical surface can be thought of as a butterfly move on splitting prototypes (see McMullen's work \cite[Section~7]{McMullen:discriminant_spin}).
        For the case of reduced prototypes, the vertical cylinder decomposition corresponds to the butterfly move $B_\infty$, which maps the prototypical surface $\modelP_D(e)$ into the prototypical surface $\modelP_D(-e - 4)$ \cite[Theorem~ 7.5]{McMullen:discriminant_spin}. Thus, the fact that the action induced by the vertical twist depends on the parity of $\frac{D - (e + 4)^2}{4}$ can also be explained in this way.
    \end{rem}
    
    In what follows, given a prototypical surface, we fix $w_1,\dots,w_5$ as in \Cref{fig:prototypical} and identify $\Sym_5$ with $\Sym(\{w_1,\dots,w_5\})$ in the natural way. Thus, for example, the transposition $(i\;j) \in \Sym_5$ corresponds to $(w_i\;w_j) \in \Sym(\{w_1,\dots,w_5\})$.
    
    \begin{prop}\label{prop:D=0 mod8}\label{prop:D=4 mod8}\label{prop:D=0 mod4}
        Let $D > 4$ be a real quadratic discriminant with $D \modn{4} 0$ and $X \in \Omega W_D$. Then, $G(X)$ has a subgroup conjugate to $\Dih_4$ generated by the action of two parabolic elements in $\SL(X)$.
    \end{prop}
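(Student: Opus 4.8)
The plan is to use the fact that $G(X)$ is conjugate to $G(g\cdot X)$ for every $g\in\SL(2,\RR)$ and that $\Omega W_D$ is a single $\SL(2,\RR)$-orbit (a Teichmüller curve), so that it suffices to prove the statement for one convenient representative. Since $D\modn{4}0$ the Weierstrass curve $W_D$ is irreducible, and for $D>4$ (hence $D\geq 8$) the interval $(-\sqrt D,\sqrt D-2)$ has length greater than $2$ and so contains an even integer; thus $R_D\neq\emptyset$ and $\Omega W_D$ contains $\modelP_D(e)$ for any $e\in R_D$. I would fix such an $e$ and work with $X=\modelP_D(e)$, drawn as on the left of \Cref{fig:Le}.

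First I would write down the two parabolic elements to be used: the horizontal twist $P_0^{t_1}=\left(\begin{smallmatrix}1 & t_1\\ 0 & 1\end{smallmatrix}\right)$ and the vertical twist $P_{\pi/2}^{t_2}=\left(\begin{smallmatrix}1 & 0\\ -t_2 & 1\end{smallmatrix}\right)$, both of which lie in $\SL(X)$ because $r_{\mathrm{h}}=b$ and $r_{\mathrm{v}}=b-e-1$ are positive integers (see \Cref{sec:parabolic-action}). Let $\tau_{\mathrm{h}},\tau_{\mathrm{v}}\in\Sym(\{w_1,\dots,w_5\})$ be the induced permutations. Because $D\modn{4}0$ forces $e$ to be even, the integers $r_{\mathrm{h}}=b$ and $r_{\mathrm{v}}=b-e-1\equiv b-1\pmod 2$ have opposite parities, so \Cref{lem:summary_twists_reduced_prototypes} tells us that exactly one of $\tau_{\mathrm{h}},\tau_{\mathrm{v}}$ is a product of two disjoint transpositions while the other is a single transposition. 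I would also record that both of them fix the (unique) regular Weierstrass point lying on a core curve of neither the horizontal nor the vertical decomposition, so that $\langle\tau_{\mathrm{h}},\tau_{\mathrm{v}}\rangle$ sits inside the copy of $\Sym_4$ permuting the remaining four regular Weierstrass points.

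Next I would read off from \Cref{fig:Le} which pair of regular Weierstrass points lies on the core curve of each of the four cylinders involved (the two horizontal and the two vertical ones); combined with \Cref{lem:summary_twists}, this pins down $\tau_{\mathrm{h}}$ and $\tau_{\mathrm{v}}$ as explicit elements of $\Sym_5$, with a short case split on the parity of $b$ (the two cases being interchanged by the rotation $R_{\pi/2}$ followed by a diagonal matrix restoring the L-shape, so that treating one of them suffices). In each case I expect the supports of the transposition and of the double transposition to interleave, so that the product $\tau_{\mathrm{h}}\tau_{\mathrm{v}}$ is a $4$-cycle on their common support; then $\langle\tau_{\mathrm{h}},\tau_{\mathrm{v}}\rangle$ contains a $4$-cycle together with a transposition of that support, hence is isomorphic to $\Dih_4$ and, inside $\Sym(\{w_1,\dots,w_5\})$, conjugate to the standard copy of $\Dih_4$. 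Since it is by construction generated by the images of two parabolic elements of $\SL(X)$, this yields the proposition.

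The hard part will be the incidence bookkeeping of the last step. The parity computation already guarantees that exactly one of the two twists is a double transposition, but to land in $\Dih_4$ rather than in a Klein four-group one must genuinely use the geometry of $\modelP_D(e)$: one has to check both that $\tau_{\mathrm{h}}$ and $\tau_{\mathrm{v}}$ fix a common regular Weierstrass point and that their reflection axes are ``perpendicular'', i.e.\ that $\tau_{\mathrm{h}}\tau_{\mathrm{v}}$ has order $4$ and not $2$. This is the only place where the explicit picture, and not just the parity data, is needed.
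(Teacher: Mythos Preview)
Your approach is essentially the paper's own: reduce to a reduced prototypical surface $\modelP_D(e)$ via irreducibility of $W_D$, use the horizontal and vertical twists as the two parabolic elements, invoke \Cref{lem:summary_twists_reduced_prototypes} with the parity observation $r_{\mathrm{h}}\not\equiv r_{\mathrm{v}}\pmod 2$ (since $e$ is even), and then read off the explicit permutations from the figure to see that $\langle\tau_{\mathrm{h}},\tau_{\mathrm{v}}\rangle\cong\Dih_4$. The paper simply writes out the two cases directly: if $b$ is even one gets $\tau_{\mathrm{h}}=(1\;2)$, $\tau_{\mathrm{v}}=(1\;3)(2\;4)$; if $b$ is odd one gets $\tau_{\mathrm{h}}=(1\;2)(3\;5)$, $\tau_{\mathrm{v}}=(1\;3)$.

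One small correction: your description of the common fixed point is not quite right. There is no regular Weierstrass point lying off the core curves of \emph{both} decompositions; rather, $w_4$ lies on the horizontal boundary but on the short vertical core curve, while $w_5$ lies on the short horizontal core curve but on the vertical boundary. The common fixed point ($w_5$ when $b$ is even, $w_4$ when $b$ is odd) is the one sitting in the short cylinder of whichever decomposition has \emph{even} ratio---it is fixed there not because it avoids the core curve but because that short cylinder undergoes an even number of half-twists. This does not affect your argument, since you compute the permutations explicitly anyway, but the heuristic you gave for why a common fixed point exists is inaccurate.
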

    
    \begin{proof}
    Let $e \in R_D$. Since $W_D$ is irreducible, there is an element in $\GL^+(2,\RR)$ that maps $X$ to $\modelP_D(e)$. In particular $G(X)$ is conjugate to $G_D(e) = G(\modelP_D(e))$, and the parabolic elements of $\SL(X)$ are conjugate to the parabolic elements of $\SL(\modelP_D(e))$. Thus, it is enough to prove the result for $G_D(e)$.
    
    By definition of $R_D$, we have that $e \modn{2} D$, so $e$ is even. Therefore, $r_{\mathrm{h}} \nmodn{2} r_{\mathrm{h}} - e - 1 = r_{\mathrm{v}}$.
    
    In the case that $D \modn{8} e^2$, $r_{\mathrm{h}} = b = \frac{D - e^2}{4}$ is even and $r_{\mathrm{v}}$ is odd. Thus, by \Cref{lem:summary_twists_reduced_prototypes}, the horizontal twist induces the permutation $\tau_{\mathrm{h}} = (1\;2)$, and the vertical twist induces the permutation $\tau_{\mathrm{v}} = (1\;3)(2\;4)$.
    It follows that
    \[
        \langle \tau_{\mathrm{h}}, \tau_{\mathrm{v}}\rangle = \DihFour{1}{3}{2}{4} \leqslant G_D(e).
    \]
    
    On the other hand, if $D \nmodn{8} e^2$, $r_{\mathrm{h}} = b = \frac{D - e^2}{4}$ is odd and $r_{\mathrm{v}}$ is even. Thus, by \Cref{lem:summary_twists_reduced_prototypes}, the horizontal twist induces the permutation $\tau_{\mathrm{h}} = (1\;2)(3\;5)$, and the vertical twist induces the permutation $\tau_{\mathrm{v}} = (1\;3)$.
    It follows that
    \[
        \langle \tau_{\mathrm{h}}, \tau_{\mathrm{v}}\rangle = \DihFour{1}{2}{3}{5} \leqslant G_D(e).
    \]
    \end{proof}
    
    \begin{prop}\label{prop:D=5 mod8}
        Let $D > 4$ be a real quadratic discriminant with $D \modn{8} 5$ and $X \in \Omega W_D$. Then, $G(X)$ has a subgroup conjugate to $\Dih_5$ generated by the action of two parabolic elements in $\SL(X)$.
    \end{prop}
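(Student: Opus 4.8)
The plan is to mimic the proof of \Cref{prop:D=0 mod4}, again passing to a reduced prototype. Fix any $e \in R_D$. Since $D \modn{8} 5$ gives $D \nmodn{8} 1$, the curve $W_D$ is irreducible, so $X$ is sent to $\modelP_D(e)$ by some element of $\GL^+(2,\RR)$; as such an element conjugates $G(X)$ onto $G_D(e) = G(\modelP_D(e))$ and carries parabolics to parabolics, it is enough to exhibit the desired subgroup inside $G_D(e)$.

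Next I would carry out the parity bookkeeping for the horizontal and vertical twists on $\modelP_D(e)$. As $D \modn{8} 5$ is odd and $e \modn{2} D$, the parameter $e$ is odd, hence $e^2 \modn{8} 1$ and $b = \frac{D - e^2}{4}$ is odd. By the formulas for $r_{\mathrm{h}}$ and $r_{\mathrm{v}}$ recalled before \Cref{lem:summary_twists_reduced_prototypes}, we get $r_{\mathrm{h}} = b$ odd and $r_{\mathrm{v}} = b - e - 1$ odd (the latter because $b - e$ is even). Thus both ratios lie in $\frac{\mathrm{odd}}{\mathrm{odd}}$, and case~(1) of \Cref{lem:summary_twists} (applied through \Cref{lem:summary_twists_reduced_prototypes}) shows that $\tau_{\mathrm{h}}$ and $\tau_{\mathrm{v}}$ are each a product of two disjoint transpositions, exchanging the two regular Weierstrass points on the core curve of \emph{each} of the two horizontal, respectively vertical, cylinders.

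It then remains to identify these four transpositions using the fixed labelling $w_1, \dotsc, w_5$ of \Cref{fig:prototypical} (applied to the reduced prototype as on the left of \Cref{fig:Le}). The long and short horizontal core curves carry $\{w_1, w_2\}$ and $\{w_3, w_5\}$, while the long and short vertical core curves carry $\{w_1, w_3\}$ and $\{w_2, w_4\}$ — the same configuration already used in the proof of \Cref{prop:D=0 mod4}, where in each direction only one of the two transpositions was active. Consequently $\tau_{\mathrm{h}} = (1\;2)(3\;5)$ and $\tau_{\mathrm{v}} = (1\;3)(2\;4)$, and a direct check gives that $\tau_{\mathrm{h}}\tau_{\mathrm{v}} = (1\;5\;3\;2\;4)$ is a $5$-cycle. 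Two involutions whose product has order $5$ generate a dihedral group of order $10$, and as this group contains the $5$-cycle $(1\;5\;3\;2\;4)$ together with the reflection $\tau_{\mathrm{h}}$, it equals
\[
    \langle \tau_{\mathrm{h}}, \tau_{\mathrm{v}} \rangle = \DihFive{1}{5}{3}{2}{4} \leqslant G_D(e),
\]
which is the required subgroup conjugate to $\Dih_5$, generated by two parabolic elements of $\SL(X)$.

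I do not anticipate a serious obstacle in this proposition: the content is the elementary parity computation together with reading off the Weierstrass configuration from the prototype, exactly as in \Cref{prop:D=0 mod4}. The only point needing care is verifying that $\tau_{\mathrm{h}}\tau_{\mathrm{v}}$ is a genuine $5$-cycle rather than a permutation with a fixed point (which would yield a smaller dihedral group); this is what pins down the labelling of the short vertical cylinder as $\{w_2, w_4\}$, and it is consistent with the computations already performed for $D \modn{4} 0$. The genuinely hard half of the overall classification — the matching reverse inclusion $G(X) \leqslant \Dih_5$ — is a separate matter, treated later in \Cref{sec:quadratic-nonresidues_H(2)}.
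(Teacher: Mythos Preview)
Your proposal is correct and follows essentially the same approach as the paper: reduce to a reduced prototype via irreducibility of $W_D$, use the parity analysis to see that both $r_{\mathrm{h}}$ and $r_{\mathrm{v}}$ are odd, and conclude $\tau_{\mathrm{h}} = (1\;2)(3\;5)$, $\tau_{\mathrm{v}} = (1\;3)(2\;4)$ generate a copy of $\Dih_5$. The only cosmetic difference is that you display the pentagon as $\DihFive{1}{5}{3}{2}{4}$ whereas the paper writes $\DihFive{1}{2}{5}{4}{3}$; since $(1\;5\;3\;2\;4) = (1\;2\;5\;4\;3)^2$, these are the same subgroup of $\Sym_5$.
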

    \begin{proof}
    Let $e \in R_D$. Since $W_D$ is irreducible, there is an element in $\GL^+(2,\RR)$ that maps $X$ to $\modelP_D(e)$. In particular $G(X)$ is conjugate to $G_D(e) = G(\modelP_D(e))$, and the parabolic elements of $\SL(X)$ are conjugate to the parabolic elements of $\SL(\modelP_D(e))$. Thus, it is enough to prove the result for $G_D(e)$.
    
    By definition of $R_D$, we have that $e \modn{2} D$, so $e$ is odd. Therefore, $r_{\mathrm{h}} \modn{2} r_{\mathrm{h}} - e - 1 = r_{\mathrm{v}}$ and $e^2 \modn{8} 1$. Hence, $r_{\mathrm{h}} = b = \frac{D - e^2}{4}$ is odd.
    Thus, by \Cref{lem:summary_twists_reduced_prototypes}, the horizontal twist induces the permutation $\tau_{\mathrm{h}} = (1\;2)(3\;5)$, and the vertical twist induces the permutation $\tau_{\mathrm{v}} = (1\;3)(2\;4)$.
    It follows that
    \[
        \langle \tau_{\mathrm{h}}, \tau_{\mathrm{v}}\rangle = \DihFive{1}{2}{5}{4}{3} \leqslant G_D(e).
        \qedhere
    \]
    \end{proof}

    \begin{prop}\label{prop:D=1 mod8}
        Let $D > 4$ be a real quadratic discriminant with $D \modn{8} 1$ and $X \in \Omega W_D$. Then, $G(X)$ has a subgroup conjugate to $\Sym_3$ generated by the action of two parabolic elements in $\SL(X)$.
    \end{prop}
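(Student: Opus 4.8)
The plan is to mimic the proofs of \Cref{prop:D=0 mod4} and \Cref{prop:D=5 mod8}: reduce to a reduced prototype $\modelP_D(e)$, and then read off the permutations $\tau_{\mathrm{h}}$ and $\tau_{\mathrm{v}}$ induced by the horizontal and vertical twists from \Cref{lem:summary_twists_reduced_prototypes}, according to the parities of $r_{\mathrm{h}}$ and $r_{\mathrm{v}}$. The one point requiring slightly more care than in the two previous propositions is the reduction step: when $D \modn{8} 1$ and $D > 9$ the curve $W_D$ is no longer irreducible, but by McMullen's classification \cite{McMullen:discriminant_spin} it splits into two irreducible components $W_D^0$ and $W_D^1$, each of which is a Teichmüller curve and contains a reduced prototype. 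Since $G(\,\cdot\,)$, together with the subgroup of $G$ generated by the images of parabolic elements, is preserved up to conjugacy by the $\GL^+(2,\RR)$-action, it is enough to prove the statement for $G_D(e) = G(\modelP_D(e))$ with $e \in R_D$ arbitrary; as the computation below shows, the conclusion depends only on the residue of $D$ modulo $8$, hence is the same on both components (and for $D = 9$ the same computation applies verbatim, with $W_9$ already irreducible).

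Next I would run the parity computation. By definition of $R_D$ we have $e \modn{2} D$, and $D$ is odd, so $e$ is odd; hence $e^2 \modn{8} 1$, and since $D \modn{8} 1$ we get $D \modn{8} e^2$, which forces $b = \frac{D - e^2}{4}$ to be even. Therefore $r_{\mathrm{h}} = b$ is even, and since $e + 1$ is also even, $r_{\mathrm{v}} = b - e - 1$ is even too; in particular both $r_{\mathrm{h}}$ and $r_{\mathrm{v}}$ lie in $\frac{\mathrm{even}}{\mathrm{odd}}$.

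Then \Cref{lem:summary_twists_reduced_prototypes} applies: the horizontal twist exchanges only the two regular Weierstrass points on the core curve of the long horizontal cylinder, and the vertical twist exchanges only the two on the core curve of the long vertical cylinder. With the labelling of \Cref{fig:prototypical} fixed as in the proofs of \Cref{prop:D=0 mod4} and \Cref{prop:D=5 mod8} — so that the long horizontal cylinder carries $w_1$ and $w_2$, and the long vertical cylinder carries $w_1$ and $w_3$ — this gives $\tau_{\mathrm{h}} = (1\;2)$ and $\tau_{\mathrm{v}} = (1\;3)$. Since two distinct transpositions on three letters generate the full symmetric group on those letters, we obtain
\[
    \langle \tau_{\mathrm{h}}, \tau_{\mathrm{v}} \rangle = \DihThree{1}{2}{3} \leqslant G_D(e),
\]
a subgroup conjugate to $\Sym_3$ and generated by the two parabolic elements realising the horizontal and vertical twists. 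This completes the argument.

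I do not expect any genuine obstacle. The computation is routine once \Cref{lem:summary_twists_reduced_prototypes} and the labelling conventions are in place; the only thing to watch is that for $D > 9$ the curve $W_D$ has two components, which I dispatch by observing that the parities of $b$ and of $b - e - 1$ — the only data entering the argument — depend only on $D$ modulo $8$. It is worth stressing that this proposition yields only the lower bound $\Sym_3 \leqslant G(X)$; promoting it to the isomorphism $G(X) \cong \Dih_6$ of \Cref{thm:main_H(2)}, which requires producing the extra reflection invisible to the horizontal and vertical twists of a reduced prototype, is carried out in the later sections.
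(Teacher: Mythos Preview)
Your proof is correct and follows essentially the same approach as the paper: reduce to a reduced prototype $\modelP_D(e)$, compute that $r_{\mathrm{h}}$ and $r_{\mathrm{v}}$ are both even when $D \modn{8} 1$, and apply \Cref{lem:summary_twists_reduced_prototypes} to obtain $\tau_{\mathrm{h}} = (1\;2)$ and $\tau_{\mathrm{v}} = (1\;3)$. Your handling of the reduction step is in fact slightly more careful than the paper's, which asserts $W_D = W_D^0 \sqcup W_D^1$ without explicitly noting that $D = 9$ is the connected exception.
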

    \begin{proof}
    In this case $W_D = W_D^0 \sqcup W_D^1$.
    Let $\varepsilon \in \ZZ/2\ZZ$ be the corresponding spin, that is, such that $X \in \Omega W_D^\varepsilon$.
    Let $e \in R_D^\varepsilon$. Since $W_D^\varepsilon$ is irreducible, there is an element in $\GL^+(2,\RR)$ that maps $X$ to $\modelP_D(e)$. In particular $G(X)$ is conjugate to $G_D(e) = G(\modelP_D(e))$, and, similarly, the parabolic elements of $\SL(X)$ are conjugate to the parabolic elements of $\SL(\modelP_D(e))$. Thus, it is enough to prove the result for $G_D(e)$.
    
    By definition of $R_D$, we have that $e \modn{2} D$, so $e$ is odd. Therefore, $r_{\mathrm{h}} \modn{2} r_{\mathrm{h}} - e - 1 = r_{\mathrm{v}}$ and $e^2 \modn{8} 1$. Hence, $r_{\mathrm{h}} = b = \frac{D - e^2}{4}$ is even. Thus, by \Cref{lem:summary_twists_reduced_prototypes}, the horizontal twist induces the permutation $\tau_{\mathrm{h}} = (1\;2)$, and the vertical twist induces the permutation $\tau_{\mathrm{v}} = (1\;3)$, regardless of the spin.
    It follows that
    \[
        \langle \tau_{\mathrm{h}}, \tau_{\mathrm{v}}\rangle = \Sym(\{1, 2, 3\}) \leqslant G_D(e).
        \qedhere
    \]
    \end{proof}
    
    \begin{prop}\label{prop:proper}
        Let $D > 4$ be a real quadratic discriminant with $D \modn{8} 1$ and $D \notin \{9, 33\}$. Let $X \in \Omega W_D$. Then, $\SL(X)$ contains a parabolic element that induces a product of two disjoint transpositions inside $G(X)$.
    \end{prop}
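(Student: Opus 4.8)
The plan is to realise the required parabolic element as the horizontal twist of a suitably chosen \emph{non-reduced} prototypical surface lying in the $\GL^+(2,\RR)$-orbit of $X$. Recall from \Cref{lem:summary_twists}(1) that the twist associated with a two-cylinder decomposition induces a product of two disjoint transpositions precisely when the ratio of moduli lies in $\frac{\mathrm{odd}}{\mathrm{odd}}$, and that since $G(X)$ and the parabolic content of $\SL(X)$ are conjugation-invariant along the orbit, it suffices to exhibit such a decomposition on any prototypical surface in the orbit of $X$. Because $D \modn{8} 1$, the Weierstrass curve splits as $W_D = W_D^0 \sqcup W_D^1$ (the one value of $D$ where it stays irreducible, $D = 9$, being excluded), so I would aim to produce, for each spin $\varepsilon$, a splitting prototype of discriminant $D$ and spin $\varepsilon$ whose horizontal decomposition has the desired ratio.

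The key point is that a splitting prototype of the shape $(a, 2N, 2, e)$ has horizontal modulus ratio $r_{\mathrm{h}} = \frac{2N}{2} = N$ (by the discussion preceding \Cref{lem:summary_twists_reduced_prototypes}), which equals an integer and therefore lies in $\frac{\mathrm{odd}}{\mathrm{odd}}$ as soon as $N$ is odd. So I would first show that for $D \modn{8} 1$ with $D \notin \{9,33\}$ there is an odd integer $e < 0$ with $e^2 + 16 \le D$ and $D \modn{32} e^2 + 16$: the congruence forces $N := \tfrac{D - e^2}{16}$ to be a positive odd integer, while $e < 0$ makes the prototype inequality $c + e < b$ (here $2 + e < 2N$) automatic, and the coprimality condition $\gcd(a,b,c,e) = 1$ holds for $a \in \{0,1\}$ because $e$ is odd. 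Since $D \modn{8} 1$ already forces $e$ to be odd and $e^2 \bmod 32 \in \{1, 9, 17, 25\}$, the congruence is solvable mod $32$ for every such $D$; a short case check on $D \bmod 32$, choosing the admissible residue class for $e$ with the smallest $|e|$, shows that one can additionally achieve $e^2 + 16 \le D$ exactly when $D \notin \{9, 33\}$ — this is the only place where the exclusion of $\{9,33\}$ enters, and the two obstructions fall out of the arithmetic. Finally, McMullen's spin formula \Cref{eq:spin} assigns $(0, 2N, 2, e)$ and $(1, 2N, 2, e)$ opposite spins, since $(c+1)(a+b+ab) = 3(a + 2N + 2Na) \modn{2} a$, so for each $\varepsilon$ one of these two prototypes has spin $\varepsilon$.

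To conclude: given $X \in \Omega W_D$ of spin $\varepsilon$, choose $a \in \{0,1\}$ so that $S := \modelP(a, 2N, 2, e)$ lies in $\Omega W_D^\varepsilon$; by irreducibility of $W_D^\varepsilon$ there is $g \in \GL^+(2,\RR)$ with $g \cdot X = S$, hence $G(X)$ is conjugate to $G(S)$ and the parabolic elements of $\SL(X)$ correspond to those of $\SL(S)$. The horizontal direction of $S$ gives a two-cylinder decomposition with $r_{\mathrm{h}} = N \in \frac{\mathrm{odd}}{\mathrm{odd}}$, so by \Cref{lem:summary_twists}(1) its twist induces a product of two disjoint transpositions in $G(S)$, and the corresponding parabolic element of $\SL(X)$ does the same in $G(X)$. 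I do not expect a genuine conceptual obstacle here: the geometric input is entirely packaged in \Cref{lem:summary_twists} and the known horizontal moduli of a prototypical surface, and the only real work is the mod-$32$ bookkeeping ensuring the exceptional set is exactly $\{9,33\}$, together with keeping track of the spin (which is precisely the role of the free parameter $a$, so that both components $\Omega W_D^0$ and $\Omega W_D^1$ are reached).
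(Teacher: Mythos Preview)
Your proposal is correct and is essentially the same argument as the paper's: the paper takes $b=4k+2$, $c=2$, $e\in\{-1,-3,-5,-7\}$, which is exactly your family $(a,2N,2,e)$ with $N=2k+1$ odd, and uses the same spin computation with $a\in\{0,1\}$ to hit both components. Your mod-$32$ case check reproduces the paper's list of $e$-values and recovers the exclusion of $\{9,33\}$ in the same way.
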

    \begin{proof}
        Let $D$ be as in the statement of the proposition. Observe that, since $D \neq 9$ and $D \modn{8} 1$, $W_D$ is reducible and $W_D = W_D^0 \sqcup W_D^1$. It is enough to find a prototypical surface for each spin such that its horizontal twist induces a product of two disjoint transpositions.
        
        Consider the choices $b = 4k + 2$, $c = 2$ and $e \in \{-1, -3, -5, -7\}$. Observe that, with these choices, we have that
        \[
            D = e^2 + 4bc = e^2 + 16 + 32k = \begin{cases}
                17 + 32k & \text{if } e = -1 \\
                25 + 32k & \text{if } e = -3 \\
                41 + 32k & \text{if } e = -5 \\
                65 + 32k & \text{if } e = -7,
            \end{cases}
        \]
        where $k \geq 0$.
        Then, both $(0,b,c,e)$ and $(1,b,c,e)$ are splitting prototypes. Indeed, $a < \gcd(b,c) = 2$ if $a \in \{0, 1\}$, $c + e \leq 1 < 2 \leq b$, and $\gcd(a, b, c, e) = 1$ since $c = 2$ and $e$ is odd. Moreover, since $b$ and $c$ are even, \Cref{eq:spin} shows that these two prototypes correspond to different spins.
        
        Finally, since $r_0 = \frac{b}{c} = 2k + 1$, we have $r_0 \in \frac{\text{odd}}{\text{odd}}$. Thus, by \Cref{lem:summary_twists}, $G(X)$ contains an element that is the product of two disjoint transpositions.
    \end{proof}
    
    \begin{rem}
        \label{rem:D=9/33}
        If $D \in \{9, 33\}$, an exhaustive search shows that no prototype satisfies that the ratio of moduli $r_0$ belongs to $\frac{\text{odd}}{\text{odd}}$ (for both spins in the case $D= 33$).
        In particular, the proof of \Cref{prop:proper} does not work in these cases. Since any two parabolic directions can be mapped to the horizontal and vertical directions on some prototypical surface, we conclude that, if we only restrict to only two parabolic elements, the largest group that can be generated by the action of these elements on the regular Weierstrass points of a surface in $\Omega W_D$ is $\Sym_3 \cong \Dih_3$. However, it is still possible to exhibit a \emph{third} explicit parabolic direction on particular representatives that allows us to conclude that $G(X) \cong \Sym_3 \times \Sym_2 \cong \Dih_6$. See~\Cref{sec:D=9,sec:D=33} below.
    \end{rem}

\section{Restrictions for permutations}
\label{sec:upper-bounds_H(2)}
    In this section, we constrain the group of permutations of Weierstrass points. To this end, we use geometric restrictions on the location of the Weierstrass points, and algebraic restrictions on the coefficients of the elements of the Veech group.
    
    Since these restrictions differ in the arithmetic case (when $D$ is a perfect square) and the non-arithmetic case, we separate the analysis in different subsections. We start with the arithmetic case, which is more straightforward.

\subsection{Arithmetic case}
\label{sec:arithmetic_H(2)}

    In this section, we analyse the case where $D \geq 9$ is a perfect square, say $D = d^2$ for $d \geq 3$ an integer. Observe first that if $D$ is odd (or, equivalently, if $d$ is odd), then $D \modn{8} 1$. Thus, the case $D \modn{8} 5$ does not arise.
    
    The Weierstrass locus $\Omega W_{d^2}$ consists of $\GL^+(2,\RR)$-orbits of square-tiled surfaces, which we will properly define in what follows. A (primitive) \emph{square-tiled surface} is a translation surface $(X, \omega)$ such that the set of periods of $\omega$ is the lattice $\ZZ[i]$ in $\CC$. That is, the integration of $\omega$ along any saddle connection produces an element of $\ZZ[i]$. For such a surface, we obtain again by integrating $\omega$ a holomorphic map
    \[
        p \colon X \to E = \CC/\ZZ[i],
    \] 
    which can be normalised so it is branched only over $z = 0$.
    
    If we assume that $(X, \omega) \in \H(2)$ is a square-tiled surface, we have, by McMullen's work \cite[Section~6]{McMullen:discriminant_spin}, that $(X,\omega) \in \Omega W_{d^2}$ for some $d$.
    Moreover, the degree of the covering map $p$ is equal to $d$, and the $d$ preimages of the unit-square fundamental domain for $E$ provide the square tiling of $X$.
    
    Since Weierstrass points are exactly those points invariant for the hyperelliptic involution, a Weierstrass point $w \in X$ can only project down to a $2$-torsion point in $E$.
    Clearly, modulo the lattice $\ZZ[i]$, the $2$-torsion points in $E$ are $0$, $h = \frac{1}{2}$,  $v = \frac{i}{2}$ and $c = \frac{1}{2} + \frac{i}{2}$.
    We say that the Weierstrass point $w \in X$ is of type $0$, $h$, $v$ or $c$, accordingly.
    In addition, a Weierstrass point $w \in X$ is called \emph{integral} if $p(w) = 0 \bmod\ZZ[i]$, that is, if $w$ is of type~$0$.
    
    To conclude the proof of \Cref{thm:main_H(2)} for the arithmetic case, we use these geometric constraints imposed by the projection of the regular Weierstrass points of $X$. In fact, if an affine transformation on $X$ maps a point of a certain type $\mathrm{t}_0$ to another point of type $\mathrm{t}_1$, then it has to map \emph{every} point of type $\mathrm{t}_0$ to a point of type $\mathrm{t}_1$, since it induces a well-defined quotient map on $E$. Moreover, the set of integral points must be preserved.
    
    Recall that the HLK-invariant is defined as the number of integral regular Weierstrass points, together with the (unordered) list of number of regular Weierstrass points of type $h$, $v$ and $c$. We will use square brackets to denote unordered lists. The restrictions of the previous paragraph are precisely equivalent to the fact that the HLK-invariant is preserved.
    
    For $e \in R_D$, it is not difficult to see that the L-shaped representatives $\modelL_D(e)$ in \Cref{fig:Le} are in fact square-tiled surfaces when $D = d^2$ is a perfect square. Indeed, we have that $\lambda = \frac{e + d}{2} \in \NN$, so these surfaces consist on $\lambda+1$ unit squares stacked upwards, and $\lambda-e$ unit squares stacked rightwards.    See \Cref{fig:rep-even,fig:rep-odd}.
    In particular, the type of $w_1,\dots,w_5 \in \modelL_D(e)$ depends only on the parity of $\lambda$ and $\lambda-e-1$.
    More precisely, we have that
    \begin{itemize}
        \item $w_1$ is always of type $c$;
        \item $w_2$ is of type $c$ whenever $\lambda-e-1$ is odd, and of type $v$ otherwise;
        \item $w_3$ is of type $c$ whenever $\lambda$ is odd, and of type $h$ otherwise;
        \item $w_4$ is of type $h$ whenever $\lambda-e-1$ is odd, and integral otherwise; and
        \item $w_5$ is of type $v$ whenever $\lambda$ is odd, and integral otherwise.
    \end{itemize}
    Furthermore, recall that these L-shaped surfaces are essentially prototypical surfaces for a reduced prototype (up to a scaling). This allows us to obtain the following simple lemma:
    
    \begin{lem} \label{lem:spin_square_tiled}
        Let $d > 2$ be an odd integer and let $e \in R_D$. Set $\lambda = \frac{e + d}{2}$. Then, $\modelL_{d^2}(e) \in \Omega W_{d^2}^0$ if $\lambda$ is odd, and $\modelL_{d^2}(e) \in \Omega W_{d^2}^1$ if $\lambda$ is even.
    \end{lem}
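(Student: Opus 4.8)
The plan is to read off the spin component of $\modelL_{d^2}(e)$ directly from the spin formula for reduced prototypes, once the conductor of $\O_{d^2}$ has been identified. First I would note that $\modelL_{d^2}(e)$ is obtained from the prototypical surface $\modelP_{d^2}(e) = \modelP(0, b, 1, e)$, with $b = \frac{d^2 - e^2}{4}$, by acting with the diagonal matrix $\left(\begin{smallmatrix}\lambda^{-1} & 0 \\ 0 & 1\end{smallmatrix}\right) \in \GL^+(2, \RR)$. Since $\modelL_{d^2}(e)$ therefore lies in the $\GL^+(2, \RR)$-orbit of $\modelP_{d^2}(e)$, the two surfaces project to the same Teichmüller curve and hence have the same spin; that is, $\modelL_{d^2}(e) \in \Omega W_{d^2}^\varepsilon$ if and only if $\modelP_{d^2}(e)$ does, i.e. if and only if $e \in R_{d^2}^\varepsilon$. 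So it suffices to decide which of $R_{d^2}^0$, $R_{d^2}^1$ contains $e$.

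Next I would identify the conductor. As $d$ is odd we have $D = d^2 \modn{8} 1$, so $W_{d^2}$ is reducible and the spin is well-defined as soon as $d \geq 5$ (the case $d = 3$, i.e. $D = 9$, is exceptional since $W_9$ is irreducible; the unique reduced prototype is then $e = -1$, for which $\lambda = 1$). The conductor $f$ of $\O_{d^2}$ is the largest positive integer whose square divides $d^2$, which is plainly $f = d$.

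Finally I would combine the two observations. By the discussion of reduced prototypes following \Cref{eq:spin}, one has $e \in R_{d^2}^0$ exactly when $e \modn{4} f = d$. Writing $e - d = 2(\lambda - d)$ with $\lambda = \frac{e + d}{2}$, the congruence $e \modn{4} d$ is equivalent to $\lambda - d$ being even, and since $d$ is odd this occurs exactly when $\lambda$ is odd. Hence $\modelL_{d^2}(e) \in \Omega W_{d^2}^0$ when $\lambda$ is odd and $\modelL_{d^2}(e) \in \Omega W_{d^2}^1$ when $\lambda$ is even, as claimed. There is no real obstacle here: the argument is merely an appeal to the already-established spin formula for reduced prototypes, the only points requiring (elementary) care being the identification $f = d$ and the translation of the congruence $e \modn{4} d$ into a parity condition on $\lambda$.
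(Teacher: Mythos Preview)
Your proposal is correct and follows essentially the same approach as the paper: reduce to the spin of the underlying reduced prototype, identify the conductor as $f=d$, and translate the congruence $e\modn{4} d$ into the parity of $\lambda$. The paper phrases the last step as ``the spin is the parity of $\frac{e-d}{2}$, hence of $\lambda+1$,'' while you write $e-d=2(\lambda-d)$ and use that $d$ is odd, but this is the same computation.
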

    
    \begin{proof}
        Fix an odd integer $d > 2$ and let $e \in R_{d^2}$. Recall that $\modelL_{d^2}(e)$ is a scaling of the reduced prototype $\left(0, \frac{d^2 - e^2}{4}, 1, e\right)$. Thus, its spin coincides with the spin of the prototypical surface $P\left(0, \frac{d^2 - e^2}{4}, 1, e\right)$.
        
        Since $D = d^2$, the conductor $f$ of $D$ is equal to $d$. By \Cref{eq:spin}, we see that the spin of this prototypical surface is equal to the parity of $\frac{e - d}{2}$. Since $d$ is odd, the spin is also equal to the parity of $\lambda + 1 = \frac{e + d}{2} + 1$, which gives the desired conclusion.
        
        This lemma also follows directly from McMullen's work \cite[Theorem~6.1]{McMullen:discriminant_spin} by counting the number of integral points in each case.
    \end{proof}
    
    \begin{prop}\label{prop:arith-d2=0 mod4}
        Let $d > 2$ be an even integer and let $X \in \Omega W_{d^2}$. Then, $G(X)$ is contained in a group conjugate to $\Dih_4$.
    \end{prop}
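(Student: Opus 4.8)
The plan is to reduce $X$ to an explicit square-tiled representative and then squeeze $G(X)$ using the HLK-invariant. Since $D = d^2 \modn{4} 0$ (because $d$ is even), we have $D \nmodn{8} 1$, so $W_{d^2}$ is irreducible; hence, exactly as in the proof of \Cref{prop:D=0 mod4}, there is an element of $\GL^+(2,\RR)$ carrying $X$ onto the L-shaped surface $\modelL_{d^2}(e)$ for any chosen reduced prototype $e \in R_{d^2}$. This conjugates $G(X)$ into $G(\modelL_{d^2}(e))$, so it suffices to bound the latter. Recall that $\modelL_{d^2}(e)$ is square-tiled, with a degree-$d$ covering $p \colon \modelL_{d^2}(e) \to E = \CC/\ZZ[i]$ branched only over $0$, and that every affine transformation descends to an automorphism of $E$ fixing $0$; consequently it preserves the HLK-invariant, that is, it fixes the set of integral regular Weierstrass points and permutes the classes of points of type $h$, $v$, $c$ according to some $\sigma \in \Sym(\{h,v,c\})$.

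First I would read off the types of $w_1, \dotsc, w_5 \in \modelL_{d^2}(e)$. Since $d$ is even, $e \modn{2} d$ is also even, so $\lambda = \tfrac{e+d}{2} \in \NN$ and $\lambda - e - 1 \modn{2} \lambda - 1$; thus exactly one of $\lambda$ and $\lambda - e - 1$ is odd. Plugging this into the list of types stated just before \Cref{lem:spin_square_tiled}, one checks that in either parity case the five regular Weierstrass points split as: a single integral point, a pair $P_1$ of points of some common type $t_1 \in \{h,v,c\}$, and a pair $P_2$ of points of some common type $t_2 \in \{h,v,c\} \setminus \{t_1\}$, with the third type of $\{h,v,c\}$ not realised by any $w_i$. (Explicitly, for $\lambda$ odd the types of $w_1, \dotsc, w_5$ are $c, v, c, 0, v$, and for $\lambda$ even they are $c, c, h, h, 0$.)

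Now the HLK constraint concludes the argument. An affine transformation fixes the unique integral Weierstrass point, and the associated $\sigma$ must carry $\{t_1, t_2\}$ into itself, since the third type is unavailable (it is unoccupied); being a bijection, $\sigma$ therefore either fixes both $t_1$, $t_2$ or swaps them. Hence the induced permutation of $\{w_1, \dotsc, w_5\}$ fixes the integral point and preserves the unordered partition $\{P_1, P_2\}$ of the four remaining points. The group of all such permutations is $\Sym_2 \wr \Sym_2$ (an inner $\Sym_2$ acting within each pair, the outer $\Sym_2$ interchanging the two pairs), which has order $8$ and is isomorphic to $\Dih_4$ --- realised on the four non-integral points as the symmetry group of the square whose two diagonals are $P_1$ and $P_2$. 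Therefore $G(\modelL_{d^2}(e))$, and with it $G(X)$, is contained in a subgroup of $\Sym_5$ conjugate to $\Dih_4$.

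The only point requiring care is the last paragraph: one must argue that the unoccupied third type forces $\sigma$ to be either the identity or a single transposition, and that the resulting order-$8$ permutation group is exactly $\Dih_4$. Both are immediate once the type data is written out; everything else is routine bookkeeping with the parities of $\lambda$ and $\lambda - e - 1$ together with the properties of square-tiled representatives already recorded above.
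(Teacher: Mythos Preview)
Your proof is correct and follows essentially the same approach as the paper: reduce to an L-shaped square-tiled representative, compute the types of the regular Weierstrass points via the parities of $\lambda$ and $\lambda-e-1$ to obtain the HLK-invariant $(1,[2,2,0])$, and conclude that the permutation group is contained in the stabiliser of this type pattern, which is $\Sym_2 \wr \Sym_2 \cong \Dih_4$. The paper's argument is identical in substance, only writing out the resulting $\Dih_4$ explicitly on the labelled Weierstrass points rather than identifying it abstractly as a wreath product.
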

    \begin{proof}
        Fix an even integer $d > 2$. Let $e \in R_D$. When $d^2 \modn{8} e^2$, we have that $\lambda$ is even and $\lambda-e-1$ is odd.
        Thus, $w_1,w_2$ are of type $c$, $w_3,w_4$ are of type $h$ and $w_5$ is integral. Thus, the HLK-invariant is $(1, [2, 2, 0])$. See \Cref{fig:rep-even}~(left).
        It follows that
        \[
            G_{d^2}(e) \leqslant \DihFour{1}{3}{2}{4}.
        \]
        
        When $d^2 \nmodn{8} e^2$, we have that $\lambda$ is even and $\lambda-e-1$ is odd.
        Thus, $w_1,w_3$ are of type $c$, $w_2,w_5$ are of type $v$ and $w_4$ is integral. Thus, the HLK-invariant is $(1, [2, 2, 0])$.
        See \Cref{fig:rep-even} (right).
        It follows that
        \[
            G_{d^2}(e) \leqslant \DihFour{1}{2}{3}{5}.
            \qedhere
        \]
    \end{proof}
    \begin{figure}[t!]
        \centering
        \includegraphics[scale=0.8]{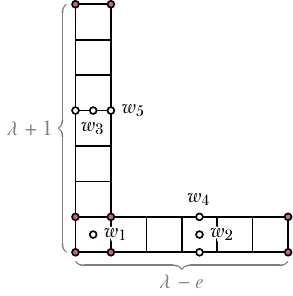}
        \qquad
        \includegraphics[scale=0.8]{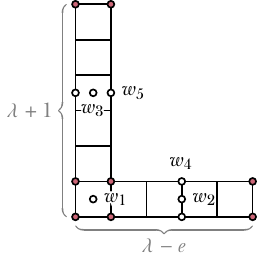}
        \caption{The L-shaped square-tiled surface $\modelL_{d^2}(e) \in \Omega W_{d^2}$, when $d$ is even.
        The case $d^2 \modn{8} e^2$ (left) and $d^2 \nmodn{8} e^2$ (right). These examples show $\modelL_{12^2}(0)$ and $\modelL_{10^2}(0)$, respectively.}
        \label{fig:rep-even}
    \end{figure}
    
    \begin{prop}\label{prop:arith-d2=1 mod8}
        Let $d > 2$ be an odd integer and let $X \in \Omega W_{d^2}$. Then, $G(X)$ is contained in a group isomorphic to $\Sym_3\times\Sym_2 \cong \Dih_6$.
    \end{prop}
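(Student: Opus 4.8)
The plan is to follow the blueprint of \Cref{prop:arith-d2=0 mod4}: work with the L-shaped square-tiled representatives $\modelL_{d^2}(e)$ and read off the restriction imposed by the HLK-invariant. Since $d>2$ is odd, $D=d^2$ satisfies $D\modn{8}1$, so $W_{d^2}$ is irreducible when $d=3$ and splits as $W_{d^2}^0\sqcup W_{d^2}^1$ with each piece irreducible when $d>3$. In either case every $X\in\Omega W_{d^2}$ is carried by some element of $\GL^+(2,\RR)$ to a representative $\modelL_{d^2}(e)$ with $e\in R_{d^2}$ lying in the same irreducible component, the component being governed, via \Cref{lem:spin_square_tiled}, by the parity of $\lambda=\frac{e+d}{2}$. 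Hence $G(X)$ is conjugate to $G_{d^2}(e)$, and it suffices to bound the latter.

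First I would record a parity observation. As $e\modn{2}D$ and $d$ is odd, $e$ is odd, so $\lambda$ is an integer and $\lambda+(\lambda-e-1)=d-1$ is even; thus $\lambda$ and $\lambda-e-1$ have the same parity. Plugging this into the list of types of $w_1,\dots,w_5$ on $\modelL_{d^2}(e)$ recalled above yields exactly two cases. If $\lambda$ is odd (spin $0$), then $w_1,w_2,w_3$ are all of type $c$, while $w_4$ is of type $h$ and $w_5$ is of type $v$, so the HLK-invariant is $(0,[3,1,1])$. If $\lambda$ is even (spin $1$), then $w_1$ is of type $c$, $w_2$ of type $v$ and $w_3$ of type $h$, while $w_4$ and $w_5$ are both integral, so the HLK-invariant is $(2,[1,1,1])$.

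Next I would translate this into a restriction on $G_{d^2}(e)$. Placing the basepoint of the covering $p\colon\modelL_{d^2}(e)\to E=\CC/\ZZ[i]$ at the singularity, every affine transformation descends to a linear automorphism of $E$ fixing $0$, hence induces a permutation $\sigma$ of $\{h,v,c\}$ arising from $\GL(2,\ZZ/2\ZZ)\cong\Sym_3$, and it preserves the set of integral Weierstrass points. In the spin-$0$ case $\sigma$ must fix $c$, because there are three Weierstrass points of type $c$ but only one of type $h$ and one of type $v$; so $\sigma\in\{\mathrm{id},(h\;v)\}$, and the induced permutation of the Weierstrass points preserves both $\{w_1,w_2,w_3\}$ and $\{w_4,w_5\}$. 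In the spin-$1$ case $w_1,w_2,w_3$ carry pairwise distinct types while $w_4,w_5$ are precisely the integral points, so again $\{w_1,w_2,w_3\}$ and $\{w_4,w_5\}$ are preserved. In both cases
\[
    G_{d^2}(e)\leqslant\Sym(\{w_1,w_2,w_3\})\times\Sym(\{w_4,w_5\})\cong\Sym_3\times\Sym_2\cong\Dih_6,
\]
as desired. The discriminant $D=9$ needs no separate treatment for this bound: the unique reduced prototype is $e=-1$, for which $\lambda=1$ is odd, so it falls into the spin-$0$ analysis.

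The step that needs the most care—and the main point of difference with \Cref{prop:arith-d2=0 mod4}—is that here the HLK-invariant genuinely depends on the spin, so the argument branches; the small miracle is that the two distinct invariants nevertheless cut $G_{d^2}(e)$ down to the \emph{same} abstract group $\Sym_3\times\Sym_2$. Everything else is routine: turning ``type-preserving up to the induced torus automorphism'' into a permutation constraint, and noting that in the spin-$0$ case it is the point count that forces $\sigma$ to fix the type $c$ (without which $\sigma$ could be an arbitrary element of $\Sym_3$).
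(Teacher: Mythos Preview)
Your proposal is correct and follows essentially the same approach as the paper: reduce to the L-shaped square-tiled representatives $\modelL_{d^2}(e)$, compute the types of the regular Weierstrass points in the two parity cases for $\lambda$, read off the HLK-invariant $(0,[3,1,1])$ or $(2,[1,1,1])$, and conclude that $\{w_1,w_2,w_3\}$ and $\{w_4,w_5\}$ are each preserved. Your parity observation that $\lambda$ and $\lambda-e-1$ have the same parity (since their sum is $d-1$) is exactly what underlies the paper's case split, and your explicit unpacking of why the HLK-invariant forces the $\Sym_3\times\Sym_2$ bound just spells out what the paper leaves implicit in its earlier discussion of how affine maps permute types.
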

    \begin{proof}
        Fix an odd integer $d > 2$. Let $e \in R_D$. When $\lambda$ is odd, \Cref{lem:spin_square_tiled} shows that $\modelL_{d^2}(e) \in \Omega W_{d^2}^0$. In this case, we have no integral regular Weierstrass points, as in \Cref{fig:rep-odd}~(left).
        Moreover, $\lambda-e-1$ is even and therefore, $w_1,w_2,w_3$ are of type $c$, $w_4$ is of type $h$ and $w_5$, of type $v$. Thus, the HLK-invariant is $(0, [3, 1, 1])$.
        It follows that
        \[
            G_{d^2}(e) \leqslant \Sym(\{1,2,3\}) \times \Sym(\{4,5\}).
        \]

        Observe that the previous situation is the only one that can arise for the particular case of $d = 3$, as $R_{3^2} = \{-1\}$ (and, hence, $\lambda = 1$).
        
        When $\lambda$ is even, \Cref{lem:spin_square_tiled} shows that $\modelL_{d^2}(e) \in \Omega W_{d^2}^1$. In this case, we have two integral regular Weierstrass points, namely $w_4$ and $w_5$, as in \Cref{fig:rep-odd}~(right). Moreover, $\lambda-e-1$ is odd and therefore, $w_1$, $w_2$ and $w_3$ are of type $c$, $v$ and $h$, respectively. Thus, the HLK-invariant is $(2, [1, 1, 1])$.
        It follows that
        \[
            G_{d^2}(e) \leqslant \Sym(\{1,2,3\}) \times \Sym(\{4,5\}).
            \qedhere
        \]
    \end{proof}
    \begin{figure}[t!]
        \centering
        \includegraphics[scale=0.8]{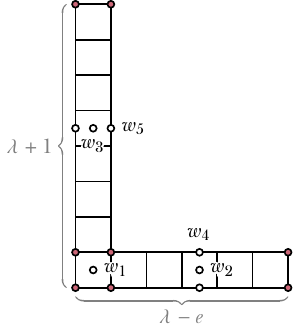}
        \qquad
        \includegraphics[scale=0.8]{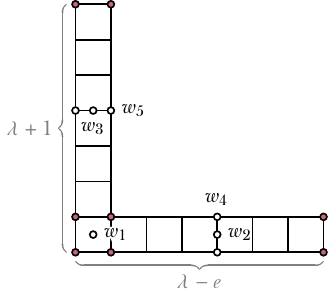}
        \caption{The L-shaped square-tiled surface $\modelL_{d^2}(e) \in \Omega W_{d^2}$, when $d$ is odd.
        The case $W_{d^2}^0$ (left) and $W_{d^2}^1$ (right). These examples show $\modelL_{13^2}(1)$ and $\modelL_{13^2}(-1)$, respectively.}
        \label{fig:rep-odd}
    \end{figure}

\subsection{Non-arithmetic case}
\label{sec:non-arithmetic_H(2)}
    In this section, we analyse the case where $D > 9$ is a real quadratic discriminant that is not a perfect square.
    We first show that if $D$ is a quadratic residue modulo~$8$, then the problem can be treated completely analogously to the arithmetic case.
    In the case where $D$ is a quadratic nonresidue modulo~$8$, that is, when $D \modn{8} 5$, we show that the action of the affine group on the set of regular Weierstrass points can be restricting by using similar ideas, although it cannot be reduced to the arithmetic case.
    
    \begin{lem} \label{lem:SL_O_D_H(2)}
        Let $D > 4$ be a real quadratic discriminantl. Let $e \in R_D$ be any reduced prototype. Then, $\SL(\modelL_D(e)) \leqslant \SL(2, \O_D)$.
    \end{lem}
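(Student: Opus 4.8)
The plan is to produce a subgroup $\Lambda$ of $(\RR^2,+)$ satisfying $\ZZ^2 \subseteq \Lambda \subseteq \O_D \oplus \O_D$ that is invariant under the derivative of every affine automorphism of $\modelL_D(e)$; given such a $\Lambda$, the lemma follows at once by evaluating an arbitrary $A \in \SL(\modelL_D(e))$ on the standard basis vectors.

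First I would make the polygon representing $\modelL_D(e)$ explicit. Recall that $\modelL_D(e) = \diag(\lambda^{-1},1)\cdot\modelP_D(e)$, where $\modelP_D(e)$ is the L-shaped surface attached to the reduced prototype $(0,b,1,e)$ with $b = \tfrac{D-e^2}{4}\in\NN$; reading off \Cref{fig:Le}, the horizontal distances between the vertices of $\modelP_D(e)$ are integer combinations of $b$ and $\lambda$, and the vertical ones are integer combinations of $1$ and $\lambda$. Since $c = 1$ we have $\lambda\bar\lambda = -bc = -b$, hence $b\lambda^{-1} = -\bar\lambda = \lambda - e$, which lies in $\O_D = \ZZ + \ZZ\lambda$. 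Consequently, after the rescaling by $\diag(\lambda^{-1},1)$, the horizontal distances between the vertices of $\modelL_D(e)$ are integer combinations of $1$ and $\lambda$, while the vertical ones are unchanged; in particular every boundary edge of the L-shaped polygon $\modelL_D(e)$ has a holonomy vector with coordinates in $\O_D$. Moreover, $\modelL_D(e)$ has a horizontal edge and a vertical edge of length $1$, so the list of edge holonomies includes the vectors $(1,0)^\tr$ and $(0,1)^\tr$; in fact one easily checks that the edge holonomies generate $\O_D \oplus \O_D$, although we shall only use the two containments below.

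Next, let $\Lambda\subseteq\RR^2$ be the subgroup generated by the holonomy vectors of all saddle connections of $\modelL_D(e)$. Since $\modelL_D(e)$ has a single singularity, each saddle connection is a cycle and its class in $H_1(\modelL_D(e);\ZZ)$ is an integer combination of the classes of the boundary edges of the polygon; hence $\Lambda$ is generated by those finitely many edge holonomies, and the previous paragraph gives $\ZZ^2 \subseteq \Lambda \subseteq \O_D \oplus \O_D$. Finally, $\Lambda$ is invariant under $\SL(\modelL_D(e))$: any $\phi\in\Aff(\modelL_D(e))$ sends saddle connections to saddle connections with $\mathrm{hol}(\phi_*\gamma) = D\phi\cdot\mathrm{hol}(\gamma)$, so $D\phi$ permutes the holonomy vectors of saddle connections and therefore preserves the group $\Lambda$ they generate.

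It remains to put these together: for $A\in\SL(\modelL_D(e))$ the vectors $A(1,0)^\tr$ and $A(0,1)^\tr$ lie in $\Lambda \subseteq \O_D \oplus \O_D$; but these are precisely the columns of $A$, so every entry of $A$ lies in $\O_D$, and since $\det A = 1$ we conclude $A \in \SL(2,\O_D)$. The only genuine content of the proof is the explicit reading of the edges of $\modelL_D(e)$ from \Cref{fig:Le}, together with the two facts that their holonomies lie in $\O_D \oplus \O_D$ and already generate $\ZZ^2$; the latter is exactly where the normalisation by $\diag(\lambda^{-1},1)$ is needed, since the horizontal periods of $\modelP_D(e)$ itself need not contain $(1,0)^\tr$. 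I do not expect any serious obstacle beyond this bookkeeping.
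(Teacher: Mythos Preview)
Your argument is correct and follows essentially the same route as the paper: identify explicit saddle connections with holonomy $(1,0)^\tr$ and $(0,1)^\tr$, observe that all saddle-connection holonomies lie in $\O_D\times\O_D$ because the edge vectors of $\modelL_D(e)$ do, and use that affine automorphisms permute saddle connections to conclude that the columns of any $A\in\SL(\modelL_D(e))$ lie in $\O_D\times\O_D$. Your packaging via the invariant subgroup $\Lambda$ and the explicit verification that $b\lambda^{-1}=\lambda-e\in\O_D$ are pleasant elaborations, but the substance is the same.
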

    \begin{proof}
        Fix $A \in \SL(\modelL_D(e))$. We have that $A$ must map every saddle connection into a saddle connection, and, therefore, must preserve the set of holonomy vectors of saddle connections.
        
        In particular, consider the two saddle connections obtained by tracing a ray from from the lower-left corner of \Cref{fig:Le} (right) and moving rightwards or upwards until the singularity is again met. The holonomy vectors of these saddle connections are $(1, 0)^\tr$ and $(0, 1)^\tr$, respectively. We conclude that both columns of $A$ are holonomy vectors of saddle connections.
        
        Finally, observe that the holonomy vector of any saddle connection must belong to $\O_D \times \O_D$, since every side length and translation vector of $\modelL_D(e)$ belongs to $\O_D$.
    \end{proof}
    
    In what follows, we are interested in the relative position of the regular Weierstrass points in $\modelL_D(e)$, with respect to the singularity. In particular, we restrict our attention to elements in $\frac{1}{2}(\O_D \times \O_D)$.
    
    Fix $\rho \in \QQ(\sqrt{D})$ such that $\O_D = \ZZ[\rho]$; the choice of $\rho$ will be made explicit later as needed.
    Each element $x$ of the field $\QQ(\sqrt{D})$ can be written uniquely as $x = p + q\rho$ with $p, q \in \QQ$. We say that $p$ is the \emph{rational part} of $x$ (note that this depends on the choice of $\rho$). Moreover, we define $\fr(x)$ as the fractional part of the rational part of $x$. That is, $\fr(x) = \{p\} = (p\bmod{1})$.
    
    \noindent\textbf{Notation.}
    \begin{itemize}[beginpenalty=10000,endpenalty=10000]
        \item For $A = \bigl(\begin{smallmatrix} \alpha & \beta \\ \gamma & \delta \end{smallmatrix}\bigr) \in \SL(2, \O_D)$, we write $\eta = p_\eta + q_\eta\rho \in \O_D$, where $p_\eta, q_\eta \in \ZZ$, for each $\eta \in \{\alpha, \beta, \gamma, \delta\}$. We also write $P_A = \bigl(\begin{smallmatrix} p_\alpha & p_\beta \\ p_\gamma & p_\delta \end{smallmatrix}\bigr)$ and $Q_A = \bigl(\begin{smallmatrix} q_\alpha & q_\beta \\ q_\gamma & q_\delta \end{smallmatrix}\bigr)$, so $A = P_A + Q_A \rho$.
        \item For $v \in \frac{1}{2}(\O_D \times \O_D)$, we write $v = \frac{1}{2}(p_1 + q_1\rho, p_2 + q_2\rho)$, where $p_i, q_i \in \ZZ$ for each $i \in \{1,2\}$. We also write $p_v = (p_1, p_2)$ and $q_v = (q_1, q_2)$, so $v = \frac{1}{2}(p_v + q_v \rho)$.
    \end{itemize}
    Then, 
    \begin{equation}
        A v = \frac{1}{2}(P_A + Q_A \rho)(p_v + q_v \rho) = \frac{1}{2} (P_A p_v + Q_A q_v \rho^2 + (P_A q_v + Q_A q_v) \rho) \label{eq:Av}
    \end{equation}
    
    \subsubsection{Quadratic residues modulo~$8$}
    \label{sec:quadratic-residues_H(2)} We will now show that, if the discriminant is a quadratic residue modulo $8$, then the constraints on the group of permutations on the regular Weierstrass points are exactly the same as in the arithmetic case. We do this by showing that if we only keep track of rational parts, then there is an analogue of the HKL-invariant that has to be preserved. The next lemma provides the first step in this direction:
    
    \begin{lem}
    Let $D > 0$ be a real quadratic discriminant and assume that it is a quadratic residue modulo $8$, say $D \modn{8} d^2$ for some $d \in \ZZ$, and let $A \in \SL(2, \O_D)$. Take $\rho = \frac{\sqrt{D} - d}{2}$. Then, we have $\fr(A v) = \frac{1}{2} P_A p_v \bmod{1}$ for any $v \in \frac{1}{2}(\O_D \times \O_D)$. 
    Moreover, $P_A$ is non-singular modulo~$2$.
    \end{lem}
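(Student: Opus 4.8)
The plan is to pin down the generator $\rho$, reduce $\rho^2$ modulo $2\ZZ$, and then read off both assertions by plainly expanding the product $Av$ of \eqref{eq:Av} and the relation $\det A = 1$.

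First I would check that $\rho = \frac{\sqrt D - d}{2}$ is a legitimate generator, i.e.\ that $\O_D = \ZZ[\rho]$, so that the notation $P_A$, $Q_A$, $p_v$, $q_v$ and the map $\fr$ make sense for this choice. Since $D \modn{8} d^2$ we have $D \modn{2} d$; taking the generator $\lambda = \frac{e + \sqrt D}{2}$ of $\O_D$ from its definition (where $D = e^2 + 4bc$, so that $e \modn{2} D \modn{2} d$), the quantity $\frac{e + d}{2}$ is an integer and $\rho = \lambda - \frac{e+d}{2}$, whence $\ZZ[\rho] = \ZZ[\lambda] = \O_D$. As $D$ is not a perfect square, $\{1, \rho\}$ is a $\QQ$-basis of $\QQ(\sqrt D)$, so identities there may be checked coefficientwise in this basis.

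Next comes the only step where the hypothesis is genuinely used. From $2\rho + d = \sqrt D$ one gets $4\rho^2 + 4d\rho + d^2 = D$, so $\rho^2 = \frac{D - d^2}{4} - d\rho$. Because $D \modn{8} d^2$, the integer $\frac{D - d^2}{4}$ is \emph{even}: write $D = d^2 + 8k$, so that $\rho^2 = 2k - d\rho$. Substituting this into \eqref{eq:Av} gives
\begin{align*}
    Av &= \tfrac12\bigl(P_A p_v + Q_A q_v\,\rho^2 + (P_A q_v + Q_A p_v)\rho\bigr) \\
       &= \tfrac12 P_A p_v + k\,Q_A q_v + \tfrac12\bigl(P_A q_v + Q_A p_v - d\,Q_A q_v\bigr)\rho.
\end{align*}
Since $k\,Q_A q_v \in \ZZ^2$, the rational part of $Av$ is congruent to $\tfrac12 P_A p_v$ modulo $\ZZ^2$; taking fractional parts componentwise yields $\fr(Av) = \tfrac12 P_A p_v \bmod 1$, which is the first assertion.

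For the second assertion, write $A = \bigl(\begin{smallmatrix} \alpha & \beta \\ \gamma & \delta \end{smallmatrix}\bigr)$ with $\eta = p_\eta + q_\eta \rho$ and expand $\det A = \alpha\delta - \beta\gamma$, using $\rho^2 = 2k - d\rho$ to reduce each product to the basis $\{1,\rho\}$. Collecting the coefficient of $1$ gives $\det A = \bigl(\det P_A + 2k\det Q_A\bigr) + (\ast)\rho$ for some $(\ast) \in \ZZ$; since $\det A = 1 \in \ZZ$, comparing rational parts forces $\det P_A + 2k\det Q_A = 1$, so $\det P_A$ is odd and $P_A$ is invertible modulo $2$. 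The computations throughout are routine; the real content is the choice of $\rho$ together with the remark that it is precisely quadratic residuosity modulo~$8$ (not merely modulo~$2$ or~$4$) that makes $\frac{D-d^2}{4}$ even and thereby removes all $\rho$-dependence from $\fr(Av)$ — which is exactly why the residue class $D \modn{8} 5$ must be handled by a separate argument later.
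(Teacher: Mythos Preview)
Your proof is correct and follows essentially the same approach as the paper's: compute $\rho^2 = \frac{D-d^2}{4} - d\rho$, observe that $\frac{D-d^2}{4}$ is even precisely because $D \modn{8} d^2$, and then read off both claims from the expansion \eqref{eq:Av} and from $\det A = 1$. Your additional verification that $\ZZ[\rho] = \O_D$ (via $\rho = \lambda - \frac{e+d}{2}$) is a useful sanity check that the paper leaves implicit.
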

    
    \begin{proof}
        We have that
        \[
            \rho^2
            = \frac{D + d^2 - 2d\sqrt{D}}{4}
            = \frac{D - d^2}{4} - d\rho
        \]
        By \Cref{eq:Av}, we obtain that the rational part of $A v$ is
        \[
            \frac{1}{2} P_A p_v + \frac{D - d^2}{8} Q_A q_v
        \]
        Since $\frac{D - d^2}{8}$ is an integer, this implies $\fr(A v) = \frac{1}{2} P_A p_v \bmod 1$.
        
        Now, a direct computation shows that
        \[
            1 = \det(A) = \det(P_A + Q_A \rho) = \det(P_A) + \det(Q_A) \rho^2 + \begin{vmatrix}
                p_\alpha & p_\beta \\ q_\gamma & q_\delta
            \end{vmatrix} \rho + \begin{vmatrix}
                q_\alpha & q_\beta \\ p_\gamma & p_\delta
            \end{vmatrix} \rho.
        \]
        Taking rational parts, we obtain $\det(P_A) + \frac{D - d^2}{4} \det(Q_A) = 1$ and, therefore, $\det(P_A) \modn{2} 1$.
    \end{proof}
    
    The previous lemmas show that the cases $D \modn{8} d^2$ can be treated as the arithmetic case.
    That is, if we set $\rho = \frac{\sqrt{D} - d}{2}$ and we only keep track of the fractional part of the rational part of the vectors $v_i \in \frac{1}{2}(\O_D \times \O_D)$ associated with the regular Weierstrass points $w_i$, for $i \in \{1, \dots, 5\}$, on a surface $X \in \H(2)$, then the action of $\SL(X)$ behaves as a linear action on points over the $2$-torsion points on a torus. This is exactly the case for the regular Weierstrass points in the arithmetic case (see \Cref{sec:arithmetic_H(2)}). As in that case, we say that a regular Weierstrass point $w_i$ is of type $0$, $h$, $v$ or $c$ depending on where $\fr(v_i)$ lies on the unit square. Since $P_A$ is non-singular modulo 2, we conclude that it acts bijectively on the sets of $1$-torsion and $2$-torsion points of the unit torus, so we may extend the definition of the HLK-invariant to this case. More precisely, we define the HLK-invariant as the following data:
    \begin{itemize}
        \item the number of points of type $0$; and
        \item the (unordered) list of number of points of type $h$, $v$ and $c$.
    \end{itemize}
    
    Given $d \in \ZZ$, a real quadratic discriminant $D > 4$ with $D \modn{8} d^2$ and $e \in R_D$, it is straightforward to compute $\fr(v_i)$ for each $i \in \{1, \dotsc, 5\}$ in the surface $\modelL_D(e)$. The following lists summarise these computations.
    \begin{enumerate}
        \item If $D \modn{8} 0$ or $D \modn{8} 4$, we have two cases:
        \begin{enumerate}
            \item If $e^2 \modn{8} D$, the HLK-invariant is $(1, [2, 2, 0])$ since
            \begin{gather*}
    	        \fr(v_1) = (1/2, 1/2), \quad
    	        \fr(v_2) = (1/2, 1/2), \quad
            	\fr(v_3) = (1/2, 0), \\
            	\fr(v_4) = (1/2, 0)
            	\quad \text{ and } \quad
            	\fr(v_5) = (0, 0)
            \end{gather*}
            \item Otherwise, if $e^2 \nmodn{8} D$, the HLK-invariant is also $(1, [2, 2, 0])$ since
            \begin{gather*}
    	        \fr(v_1) = (1/2, 1/2), \quad
    	        \fr(v_2) = (0, 1/2), \quad
            	\fr(v_3) = (1/2, 1/2), \\
            	\fr(v_4) = (0, 0)
            	\quad \text{ and } \quad
            	\fr(v_5) = (0, 1/2).
            \end{gather*}
        \end{enumerate}
        \item If $D \modn{8} 1$, we have two cases:
        \begin{enumerate}
            \item If $e \modn{4} 1$, the HLK-invariant is $(2, [1, 1, 1])$ since
            \begin{gather*}
    	        \fr(v_1) = (1/2, 1/2), \quad
    	        \fr(v_2) = (0, 1/2), \quad
            	\fr(v_3) = (1/2, 0), \\
            	\fr(v_4) = (0, 0)
            	\quad \text{ and } \quad
            	\fr(v_5) = (0, 0).
            \end{gather*}
            \item If $e \modn{4} 3$, the HLK-invariant is $(0, [3, 1, 1])$ since
            \begin{gather*}
    	        \fr(v_1) = (1/2, 1/2), \quad
    	        \fr(v_2) = (1/2, 1/2), \quad
            	\fr(v_3) = (1/2, 1/2), \\
            	\fr(v_4) = (1/2, 0)
            	\quad \text{ and } \quad
            	\fr(v_5) = (0, 1/2).
            \end{gather*}
        \end{enumerate}
    \end{enumerate}
    Observe that the HLK-invariants that we obtain are precisely the same as the ones found for the arithmetic case.
    
    The linear action on these points restricts the possibilities of the action of $\Aff(\modelL_D(e))$ on regular Weierstrass points. We get the following results which are completely analogous to the arithmetic case (that is, to \Cref{prop:arith-d2=0 mod4,prop:arith-d2=1 mod8}):
    \begin{enumerate}
        \item If $D$ is even, that is, $D \modn{8} 0$ or $D \modn{8} 4$, we have two cases:
        \begin{enumerate}
            \item If $e^2 \modn{8} D$, then
            \[
                G_D(e) \leqslant \DihFour{1}{3}{2}{4},
            \]
            \item Otherwise, if $e^2 \nmodn{8} D$, then
            \[
                G_D(e) \leqslant \DihFour{1}{2}{3}{5},
            \]
        \end{enumerate}
        \item If $D$ is odd, that is $D \modn{8} 1$, then
            \[
                G_D(e) \leqslant \Sym(\{1,2,3\}) \times \Sym(\{4,5\}).
            \]
    \end{enumerate}
    Thus, we get the following result:
    \begin{cor}\label{cor:upper-bounds}
    Let $D > 4$ be a real quadratic discriminant which is a quadratic residue modulo $8$. We have the following:
    \begin{enumerate}[label=(\alph*),beginpenalty=10000,endpenalty=10000]
        \item\label{ub:even} If $D \modn{4} 0$ and $X \in \Omega W_D$, then $G(X)$ is contained in a group conjugate to $\Dih_4$.
        \item\label{ub:odd} If $D \modn{8} 1$ and $X \in \Omega W_D$ (regardless of the spin), then $G(X)$ is contained in a group isomorphic to $\Sym_3\times\Sym_2 \cong \Dih_6$.
        \qed
    \end{enumerate}
    \end{cor}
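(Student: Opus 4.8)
The plan is to assemble the ingredients established above into a short bookkeeping argument; I expect no genuine obstacle beyond one finite check. First I would reduce to the $L$-shaped representatives $\modelL_D(e)$. If $D \modn{4} 0$ then $D \nmodn{8} 1$, so $W_D$ is irreducible and, for any $X \in \Omega W_D$, there is an element $g \in \GL^+(2,\RR)$ carrying $X$ to some $\modelL_D(e)$ with $e \in R_D$; if $D \modn{8} 1$ then $W_D = W_D^0 \sqcup W_D^1$ with both pieces irreducible, so for $X \in \Omega W_D^\varepsilon$ there is $g \in \GL^+(2,\RR)$ carrying $X$ to some $\modelL_D(e)$ with $e \in R_D^\varepsilon$. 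Since $G(X)$ and $G(g \cdot X)$ are conjugate inside $\Sym(\{w_1,\dotsc,w_5\})$ for any $g \in \GL^+(2,\RR)$, it suffices to bound $G_D(e) = G(\modelL_D(e))$.

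Next I would invoke the $2$-torsion picture developed in \Cref{sec:quadratic-residues_H(2)}. Since $D$ is a quadratic residue modulo $8$, write $D \modn{8} d^2$ and take $\rho = \frac{\sqrt{D} - d}{2}$. By \Cref{lem:SL_O_D_H(2)} we have $\SL(\modelL_D(e)) \leqslant \SL(2, \O_D)$, and by the lemma preceding the tabulated computations every $A \in \SL(2, \O_D)$ acts on the fractional parts of the rational parts of the vectors attached to the regular Weierstrass points by $v \mapsto \frac{1}{2} P_A p_v \bmod 1$, with $P_A$ non-singular modulo $2$. Hence any affine transformation of $\modelL_D(e)$ induces a bijection of the $1$- and $2$-torsion points of the unit torus and therefore preserves the HLK-invariant: it must send each $w_i$ to a regular Weierstrass point of the same type, and in particular preserve the set of integral points.

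Then I would read off the pointwise type-stabiliser from the explicit values of $\fr(v_i)$ for $\modelL_D(e)$ tabulated above. When $D \modn{4} 0$ the HLK-invariant is $(1, [2,2,0])$, and the subgroup of $\Sym(\{w_1,\dotsc,w_5\})$ fixing the type of every $w_i$ is exactly $\DihFour{1}{3}{2}{4}$ if $e^2 \modn{8} D$ and exactly $\DihFour{1}{2}{3}{5}$ if $e^2 \nmodn{8} D$; when $D \modn{8} 1$ the HLK-invariant is $(2, [1,1,1])$ or $(0, [3,1,1])$ and the stabiliser is exactly $\Sym(\{1,2,3\}) \times \Sym(\{4,5\})$ in both subcases. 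These are precisely the inclusions $G_D(e) \leqslant \cdots$ displayed immediately before the statement. Since both $\DihFour{1}{3}{2}{4}$ and $\DihFour{1}{2}{3}{5}$ are conjugate in $\Sym_5$ to a standard copy of $\Dih_4$, part (a) follows; and since $\Sym(\{1,2,3\}) \times \Sym(\{4,5\}) \cong \Sym_3 \times \Sym_2 \cong \Dih_6$ regardless of which spin component contains $X$, part (b) follows.

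The only step carrying content beyond bookkeeping is the computation of these pointwise type-stabilisers, which is a finite verification over the listed configurations of types; the one subtlety worth flagging is that in the even case the two admissible HLK-configurations produce two distinct but conjugate copies of $\Dih_4$ inside $\Sym_5$, which is why part (a) must be stated only up to conjugacy.
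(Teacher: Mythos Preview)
Your approach matches the paper's: reduce to $\modelL_D(e)$ via irreducibility of the components, then use the mod-$2$ action of $P_A$ on $2$-torsion to constrain $G_D(e)$ by the displayed inclusions immediately preceding the corollary.

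There is, however, a misstatement that would break the argument if taken literally. You say an affine transformation ``must send each $w_i$ to a regular Weierstrass point of the same type'' and then compute ``the subgroup of $\Sym(\{w_1,\dotsc,w_5\})$ fixing the type of every $w_i$''. Pointwise type preservation is \emph{not} what the $2$-torsion picture gives: since $P_A$ is only known to be non-singular modulo $2$, it induces a permutation of the nonzero $2$-torsion classes $\{h,v,c\}$ while fixing $0$. The pointwise type-stabiliser is strictly smaller than the groups you claim---for instance, with HLK-invariant $(1,[2,2,0])$ it is the Klein four-group $\Sym(\{1,2\})\times\Sym(\{3,4\})$, not $\Dih_4$; with HLK-invariant $(2,[1,1,1])$ it is just $\Sym(\{4,5\})$, not $\Sym_3\times\Sym_2$. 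The correct constraint is that points of the same type are sent to points of a common (possibly different) nonzero type, while integral points are preserved; \emph{that} constraint yields exactly the $\Dih_4$ and $\Sym_3\times\Sym_2$ bounds you state. So your conclusions are right, but the sentence justifying them needs to be rewritten to reflect that types $h$, $v$, $c$ may be permuted.
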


\subsubsection{Quadratic nonresidues modulo~$8$}
\label{sec:quadratic-nonresidues_H(2)}

In the remaining case of $D \modn{8} 5$, let $e \in R_D$ and consider $\rho = \lambda = \frac{\sqrt{D} + e}{2}$.
With this choice of $\rho$, we get that the locationstm of the regular Weierstrass points (relative to the singularity) in $\modelL_D(e)$, up to addition by elements in $\O_D$, are
\begin{gather*}
	v_1 = \left(\frac{1}{2}, \frac{1}{2}\right),
	\quad
	v_2 = \left(\frac{1}{2}\rho, \frac{1}{2}\right),
	\quad
	v_3 = \left(\frac{1}{2}, \frac{1}{2}\rho\right), 
	\\
	v_4 = \left(\frac{1}{2}\rho, 0\right)
	\quad \text{ and } \quad
	v_5 = \left(0, \frac{1}{2}\rho\right).
\end{gather*}

\begin{lem}
Let $D > 4$ be a real quadratic discriminant with $D \modn{8} 5$. Let $e \in R_D$, and observe that $e$ is odd. Fix $\rho = \frac{\sqrt{D} + e}{2}$. If $A \in \SL(\modelL_D(e))$ fixes $v_1 \bmod \O_D$, then $\fr(Av_4) = \fr(Av_5)$.
\end{lem}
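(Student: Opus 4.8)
The plan is to translate the hypothesis ``$A$ fixes $v_1 \bmod \O_D$'' into a parity condition on the entries of $A$, and then to compute $\fr(Av_2)$ and $\fr(Av_3)$ directly. The key observation is that, by the list of positions recorded just above, $v_2 - v_3 = \bigl(\frac{1}{2}\rho,\, -\frac{1}{2}\rho\bigr)$, so that $A v_2 - A v_3 = \frac{1}{2}\rho\,(\alpha - \beta,\, \gamma - \delta)$, where $\alpha, \beta, \gamma, \delta$ are the entries of $A$.

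First I would record the arithmetic of $\O_D = \ZZ[\rho]$ for $\rho = \frac{e + \sqrt{D}}{2}$. Since $\rho$ satisfies $\rho^2 = e\rho + b$ with $b = \frac{D - e^2}{4}$, and $\{1, \rho\}$ is a $\ZZ$-basis of $\O_D$, one gets for any $\eta = p_\eta + q_\eta\rho \in \O_D$ (with $p_\eta, q_\eta \in \ZZ$) that $\frac{1}{2}\rho\,\eta = \frac{1}{2}q_\eta b + \frac{1}{2}(p_\eta + q_\eta e)\rho$, whose rational part is $\frac{1}{2}q_\eta b$. Now, since $e$ is odd and $D \modn{8} 5$, we have $D - e^2 \modn{8} 4$, so $b$ is odd; hence $\fr\!\bigl(\frac{1}{2}\rho\,\eta\bigr) = \frac{1}{2}q_\eta \bmod 1$, which vanishes precisely when $q_\eta$ is even. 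This reduces the problem to showing that the $\rho$-coefficients of $\alpha - \beta$ and of $\gamma - \delta$ are even.

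Next I would unpack the hypothesis. By \Cref{lem:SL_O_D_H(2)}, $A = \bigl(\begin{smallmatrix}\alpha & \beta \\ \gamma & \delta\end{smallmatrix}\bigr)$ has entries in $\O_D$; write $\eta = p_\eta + q_\eta\rho$ for $\eta \in \{\alpha, \beta, \gamma, \delta\}$. Since $v_1 = \frac{1}{2}(1, 1)$, the condition $Av_1 \equiv v_1 \bmod \O_D$ (componentwise) is equivalent to $\alpha + \beta \in 1 + 2\O_D$ and $\gamma + \delta \in 1 + 2\O_D$. Reading off the $\rho$-coefficients against the basis $\{1, \rho\}$ forces $q_\alpha + q_\beta$ and $q_\gamma + q_\delta$ to be even, hence $q_\alpha - q_\beta$ and $q_\gamma - q_\delta$ are even as well; these are exactly the $\rho$-coefficients of $\alpha - \beta$ and $\gamma - \delta$.

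Finally, since $\fr$ (the fractional part of the rational part, applied componentwise to vectors) is additive, I would conclude $\fr(Av_2) - \fr(Av_3) = \fr\!\bigl(\frac{1}{2}\rho\,(\alpha - \beta,\, \gamma - \delta)\bigr) = \bigl(\fr(\tfrac{1}{2}\rho(\alpha - \beta)),\ \fr(\tfrac{1}{2}\rho(\gamma - \delta))\bigr) = (0, 0)$ by the two previous steps, and therefore $\fr(Av_2) = \fr(Av_3)$. None of these steps is a genuine obstacle: the computation is pure bookkeeping in the $\ZZ$-basis $\{1,\rho\}$, and the only point requiring care is the arithmetic of $\rho^2$ together with the deduction that $b$ is odd — which is precisely where the assumption $D \modn{8} 5$ is used and what makes the fractional parts collapse.
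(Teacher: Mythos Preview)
Your proof is correct and follows essentially the same approach as the paper: both extract the parity conditions $q_\alpha + q_\beta \modn{2} 0$ and $q_\gamma + q_\delta \modn{2} 0$ from the hypothesis $Av_1 \equiv v_1 \pmod{\O_D}$, and both hinge on the fact that $b = \frac{D-e^2}{4}$ is odd (exactly where $D \modn{8} 5$ enters). The only difference is cosmetic: the paper computes $\fr(Av_2)$ and $\fr(Av_3)$ separately and checks they coincide, whereas you work with the difference $v_2 - v_3 = \frac{1}{2}\rho\,(1,-1)$ and show $\fr(A(v_2 - v_3)) = 0$, which is a mild streamlining. One small remark: when you write ``$\fr$ is additive'', this is additivity in $\RR/\ZZ$, so the displayed equality $\fr(Av_2) - \fr(Av_3) = \fr\bigl(\frac{1}{2}\rho(\alpha-\beta,\gamma-\delta)\bigr)$ should be read modulo $1$; since the right-hand side vanishes, the conclusion $\fr(Av_2)=\fr(Av_3)$ is unaffected.
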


\begin{proof}
    Note first that
    \[
        \frac{1}{2}\rho^2 = \frac{e}{2}\rho + \frac{D-e^2}{8} \modn{\O_D} \frac{1}{2}\rho + \frac{1}{2}.
    \]
    Then, by \Cref{eq:Av}, we have that
    \begin{align*}
        A v &\modn{\O_D} \frac{1}{2}
        \begin{pmatrix}
            p_\alpha p_1 + p_\beta p_2 + q_\alpha q_1 + q_\beta q_2 \\
            p_\gamma p_1 + p_\delta p_2 + q_\gamma q_1 + q_\delta q_2
        \end{pmatrix} \\
            &\quad + \frac{\rho}{2}
        \begin{pmatrix}
            q_\alpha q_1 + q_\beta q_2 + p_\alpha q_1 + p_\beta q_2 + q_\alpha p_1 + q_\beta p_2 \\
            q_\gamma q_1 + q_\delta q_2 + p_\gamma q_1 + p_\delta q_2 + q_\gamma p_1 + q_\delta p_2
        \end{pmatrix}
    \end{align*}
    for any $v \in \frac{1}{2}(\O_D \times \O_D)$.
    In particular, we have that
        \[
        \fr(Av) \modn{1} \frac{1}{2}
    \begin{pmatrix}
        p_\alpha p_1 + p_\beta p_2 + q_\alpha q_1 + q_\beta q_2 \\
        p_\gamma p_1 + p_\delta p_2 + q_\gamma q_1 + q_\delta q_2
    \end{pmatrix}.
    \]
    
    Since $A$ fixes $v_1 \bmod{\O_D}$, we obtain that
    \[
        v_1 = \frac{1}{2}
        \begin{pmatrix}
            1 \\ 1
        \end{pmatrix}
        \modn{\O_D}
        \frac{1}{2}
        \begin{pmatrix}
            p_\alpha + p_\beta \\
            p_\gamma + p_\delta
        \end{pmatrix}
        +
        \frac{\rho}{2}
        \begin{pmatrix}
            q_\alpha + q_\beta \\
            q_\gamma + q_\delta
        \end{pmatrix}.
    \]
    It follows that 
    \begin{align*}
        p_\alpha + p_\beta & \modn{2} 1, &
        q_\alpha + q_\beta & \modn{2} 0, \\
        p_\gamma + p_\delta & \modn{2} 1, &
        q_\gamma + q_\delta & \modn{2} 0.
    \end{align*}
    
    Applying this to $v_4$ and $v_5$, we get
    \[
        \fr(Av_4)
        \modn{1}
        \frac{1}{2}
        \begin{pmatrix}
            q_\alpha \\
            q_\gamma
        \end{pmatrix}
        \modn{1}
        \frac{1}{2}
        \begin{pmatrix}
            q_\beta \\
            q_\delta
        \end{pmatrix}
        \modn{1}
        \fr(Av_5).
        \qedhere
    \] 
\end{proof}

\begin{prop}
    Let $D > 4$ be a real quadratic discriminant with $D \modn{8} 5$. Then, for any $e \in R_D$, the group $G_D(e) = G(\modelL_D(e))$ of permutations of the regular Weierstrass points of $\modelL_D(e)$ is such that
    \[
        G \leqslant \DihFive{1}{2}{5}{4}{3}.
    \]
\end{prop}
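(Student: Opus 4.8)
The plan is to deduce the proposition from the lower bound already in hand together with the preceding lemma, the remaining step being a short group-theoretic argument. Since $\modelL_D(e)$ is obtained from $\modelP_D(e)$ by the diagonal matrix $\left(\begin{smallmatrix}\lambda^{-1} & 0 \\ 0 & 1\end{smallmatrix}\right)$, which induces an isomorphism of translation surfaces respecting the labelling of the Weierstrass points, \Cref{prop:D=5 mod8} — whose proof only uses the horizontal and vertical twists, inducing $(1\;2)(3\;5)$ and $(1\;3)(2\;4)$ — already gives $\DihFive{1}{2}{5}{4}{3} \leqslant G_D(e)$. So it is enough to rule out that this inclusion is strict.

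First I would turn the preceding lemma into a constraint on point stabilisers. Suppose $\sigma \in G_D(e)$ fixes $w_1$, and realise $\sigma$ by an affine transformation with derivative $A \in \SL(\modelL_D(e)) \leqslant \SL(2, \O_D)$ (using \Cref{lem:SL_O_D_H(2)}). As every affine transformation fixes the unique zero, $A v_i \equiv v_{\sigma(i)} \pmod{\O_D \times \O_D}$ for each $i$; in particular $A$ fixes $v_1 \bmod \O_D$, so the lemma yields $\fr(v_{\sigma(2)}) = \fr(A v_2) = \fr(A v_3) = \fr(v_{\sigma(3)})$, where $\fr$ is unchanged upon adding elements of $\O_D$. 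From the displayed values of the $v_i$ one has $\fr(v_1) = \fr(v_2) = \fr(v_3) = (1/2,1/2)$, while $\fr(v_4) = (1/2,0)$ and $\fr(v_5) = (0,1/2)$ are distinct from these and from each other. Since $\sigma(2)$ and $\sigma(3)$ are distinct elements of $\{2,3,4,5\}$ sharing a value of $\fr$, we must have $\{\sigma(2),\sigma(3)\} = \{2,3\}$, hence $\{\sigma(4),\sigma(5)\} = \{4,5\}$; thus the stabiliser of $w_1$ in $G_D(e)$ is contained in $\langle (2\;3),(4\;5)\rangle \cong \ZZ/2 \times \ZZ/2$.

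Then I would conclude by finite group theory. Up to conjugacy, the subgroups of $\Sym_5$ properly containing a copy of $\Dih_5$ are the Frobenius group $F_{20}$ of order $20$, the group $A_5$, and $\Sym_5$; each is transitive on the five points, so if $G_D(e)$ were one of them its point stabilisers would have order at least $4$. By the previous paragraph this forces $|G_D(e)| = 20$ and $G_D(e) \cong F_{20}$, whose point stabilisers are cyclic of order $4$ — impossible inside $\ZZ/2 \times \ZZ/2$. Hence $G_D(e) = \DihFive{1}{2}{5}{4}{3}$, which in particular gives the stated inclusion. Conceptually, the rigidity is due to $D \modn{8} 5$ forcing $\O_D/2\O_D \cong \FF_4$: the five regular Weierstrass points are then identified with the five points of $\mathbb{P}^1(\FF_4)$, and $\SL(\modelL_D(e))$ acts on them through $\SL(2,\FF_4) \to \mathrm{PSL}(2,\FF_4) \cong A_5$, so $G_D(e) \leqslant A_5$; as $\Dih_5$ is maximal in $A_5$, the lemma only has to exclude $A_5$ itself.

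The arithmetic heart of the argument is the preceding lemma, which is already established, and the matching lower bound is \Cref{prop:D=5 mod8}; the only delicate bookkeeping is to make sure the copy of $\Dih_5$ produced by the twists is exactly $\DihFive{1}{2}{5}{4}{3}$ in the labelling of the $w_i$ used for $\modelL_D(e)$, so that it is directly comparable with the stabiliser bound, and to invoke correctly the classification of overgroups of $\Dih_5$ in $\Sym_5$ and the structure of $F_{20}$. I do not expect a genuinely hard step beyond that.
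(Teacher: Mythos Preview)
Your proposal is correct and follows essentially the same line as the paper's proof: both use the preceding lemma to show that any element of $G_D(e)$ fixing $w_1$ must preserve $\{w_2,w_3\}$ and $\{w_4,w_5\}$, hence the stabiliser of $1$ consists of elements of order at most $2$, and both then conclude by the classification of subgroups of $\Sym_5$ of order divisible by $10$ (the paper's $\GA(1,5)$ is your $F_{20}$). Your extra $\mathbb{P}^1(\FF_4)$ remark is a nice conceptual gloss not present in the paper, but the argument itself is the same.
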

\begin{proof}
    Using $\rho = \frac{\sqrt{D}+e}{2}$ as in the previous lemma, we have that
    \begin{gather*}
        \fr(v_1) = (1/2, 1/2), \quad
        \fr(v_2) = (0, 1/2), \quad
    	\fr(v_3) = (1/2, 0), \\
    	\fr(v_4) = (0, 0)
    	\quad \text{ and } \quad
    	\fr(v_5) = (0, 0).
    \end{gather*}
    
    Let $A \in \SL(\modelL_D(e))$ that fixes $w_1$. Then, $A$ fixes $v_1 \bmod \O_D$ and, by the previous lemma, $\fr(Av_4) = \fr(A v_5)$. But $\{v_4,v_5\}$ is the only possible such pair, so $A$ fixes $\{w_4,w_5\}$. Since $A$ fixes $\{w_1\}$ and $\{w_4,w_5\}$, it also fixes $\{w_2,w_3\}$.
    
    In particular, every element in $G_D(e)$ that fixes $1$ has order at most~$2$.
    On the other hand, by \Cref{prop:D=5 mod8}, $G_D(e)$ has a subgroup of order~$10$.
    But, up to automorphisms, the subgroups of $\Sym_5$ of order divisible by $10$ are $\Dih_5$, $\GA(1,5)$, $A_5$ and $\Sym_5$. All of them, but $\Dih_5$, have elements of order greater than two that stabilise~$1$.
\end{proof}
We obtain the following straightforward corollary:
\begin{cor}
    \label{cor:upper-bounds-D=5 mod 8}
    Let $D > 4$ be a real quadratic discriminant with $D \modn{8} 5$ and $X \in \Omega W_D$.  Then, $G(X)$ is contained in a group conjugate to $\Dih_5$.
    \qed
\end{cor}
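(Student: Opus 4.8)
The plan is to derive the corollary directly from the preceding proposition, which gives $G_D(e) \leqslant \Dih_5$ (inside $\Sym_5$) for every reduced prototype $e \in R_D$, by the same orbit-reduction argument already used in the proofs of \Cref{prop:D=0 mod4,prop:D=5 mod8,prop:D=1 mod8}. In other words, I would not prove anything new; I would only transport the statement from the distinguished representatives $\modelL_D(e)$ to an arbitrary $X \in \Omega W_D$.

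First I would note that $D \modn{8} 5$ forces $D \nmodn{8} 1$, so McMullen's classification guarantees that the Weierstrass curve $W_D$ is irreducible; hence $\Omega W_D$ is a single $\GL^+(2, \RR)$-orbit. Since every L-shaped surface $\modelL_D(e)$ with $e \in R_D$ lies in $\Omega W_D$, and $R_D \neq \emptyset$ whenever $D > 4$, for an arbitrary $X \in \Omega W_D$ there exist a matrix $g \in \GL^+(2, \RR)$ and a reduced prototype $e \in R_D$ such that $g \cdot X = \modelL_D(e)$.

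Next I would observe that conjugation by $g$ induces an isomorphism from $\Aff(X)$ onto $\Aff(\modelL_D(e))$ (equivalently from $\SL(X)$ onto $\SL(\modelL_D(e))$, as $\Aut$ is trivial in $\H(2)$) that carries the regular Weierstrass points of $X$ bijectively onto those of $\modelL_D(e)$ and intertwines the two permutation actions. Consequently $G(X)$ is conjugate in $\Sym_5$ to $G_D(e) = G(\modelL_D(e))$, which is harmless since $G(X)$ is in any case defined only up to conjugacy (we do not track the labels of the $w_i$). Applying the preceding proposition, $G_D(e)$ is contained in the explicit copy of $\Dih_5$ exhibited there, and therefore $G(X)$ is contained in a group conjugate to $\Dih_5$, as claimed.

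The argument is entirely routine; the only point deserving a word of justification is that acting by $\GL^+(2, \RR)$ preserves the permutation action on regular Weierstrass points up to conjugacy — the observation already invoked repeatedly in \Cref{sec:parabolic_H(2)} — so I do not anticipate any real obstacle, and the corollary is genuinely a corollary.
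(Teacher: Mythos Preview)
Your argument is correct and is exactly what the paper intends: the corollary is marked with a terminal \qed\ and no explicit proof, since it follows immediately from the preceding proposition by the same $\GL^+(2,\RR)$-orbit reduction used throughout \Cref{sec:parabolic_H(2)}. There is nothing to add.
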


    \section{Small discriminants}
    \label{sec:remaining_H(2)}
    In this section we treat the remaining cases are not covered by our previous arguments. That is, the cases where the discriminant $D$ is either $9$ or $33$.

    \subsection{Discriminant 9}
    \label{sec:D=9} 
     \begin{figure}[t!]
            \centering
            \includegraphics[scale=1]{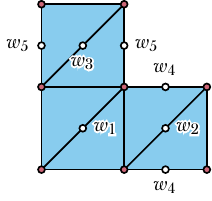}
            \caption{In the case of $D=9$, $\modelL_{9}(-1) \in \Omega W_{9}$ is a square-tiled surface and the diagonal direction induces the permutation $(4,5) \in G_{9}(-1)$.}
            \label{fig:9}
        \end{figure}
    We will start by fixing $D = 9$, so $R_{9} = \{-1\}$ and that the surface $S = \modelL_{25}(-1)$ is the primitive square-tiled surface depicted in \Cref{fig:9}.

        The action of the horizontal and vertical twists on $S$ generates $\Sym(\{1,2,3\}) \leqslant G(S)$, as was seen in \Cref{prop:D=1 mod8}.
        On the other hand, the diagonal direction, (that is, taking $\theta = \arctan(1) = \pi/4$) on $S$ gives a one-cylinder decomposition whose core curve contains $w_4$ and $w_5$, as shown in \Cref{fig:9}.
        It follows that $(4\;5) \in G(S)$ and, by \Cref{cor:upper-bounds}, we conclude that $G(S) = \Sym(\{1,2,3\})\times\Sym(\{4,5\}) \cong \Dih_6$.
        
        Finally, since $\Omega W_9$ is connected, we obtain that $G(X) \cong \Sym_3\times\Sym_2 \cong \Dih_6$ for every $X \in \Omega W_9$. Combining this with \Cref{prop:D=1 mod8}, we obtain that $G(X)$ can be generated by the action of three parabolic elements.
        \qed

    \subsection{Discriminant 33}
    \label{sec:D=33}
        We now focus our attention to the case where $D = 33$. We have that $R_{33} = \{-5,-3,-1,1\}$ and we will consider the surfaces $\modelL_{33}(-1)$ and $\modelL_{33}(1)$, depicted in \Cref{fig:33}. By \Cref{eq:spin}, these surfaces belong to distinct components of $\Omega W_{33}$.
        \begin{figure}[t!]
            \centering
            \begin{subfigure}[b]{0.32\textwidth}
                \centering
                \includegraphics[scale=1]{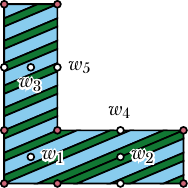}
                \caption*{Slope $\frac{1}{\lambda-e-1} = \frac{1}{\lambda}$ on the surface $\modelL_{33}(-1)$.}
                \label{fig:33--1}
            \end{subfigure}
            \qquad
            \begin{subfigure}[b]{0.32\textwidth}
                \centering
                \includegraphics[scale=1]{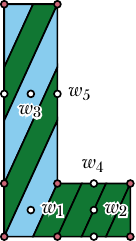}
                \caption*{Slope $\frac{3}{\lambda-e-1} = \frac{3}{\lambda-2}$ on the surface $\modelL_{33}(1)$.}
                \label{fig:33-1}
            \end{subfigure}
            \caption{In the case $D=33$, we exhibit explicit directions that induce the permutation $(4,5) \in G_{33}(e)$, for $e = -1$ (left) and $e = 1$ (right). The cylinder $C_1$ is shown in a darker shade, while $C_2$ is depicted in a lighter shade.}
            \label{fig:33}
        \end{figure}
        
        By \Cref{prop:D=1 mod8}, the action of the horizontal and vertical twists on both surfaces generates $\Sym_3 \leqslant G(S)$. In both cases, we will exhibit an explicit direction that induces the remaining transposition $(4\;5)$. After this is done, we will obtain that $G(X)$ is isomorphic to $\Sym_3 \times \Sym_2 \cong \Dih_6$ for every $X \in \Omega W_{33}$. Combining this with \Cref{prop:D=1 mod8}, we obtain that $G(X)$ can be generated by the action of three parabolic elements.
        
        To simplify the computations, we define the \emph{horizontal height} of a cylinder to be the length of a maximal horizontal segment contained in the cylinder. Indeed, when computing the ratio of the moduli of two cylinders in a cylinder decomposition, we may replace the height by the horizontal height and obtain the same result. In fact, the horizontal height is equal to the height up to a linear factor that is cancelled out when taking the ratio. In other words, the horizontal height corresponds to the actual height of the same cylinder when considering a sheared version of the surface; the shear is performed in the direction of the cylinder decomposition and, thus, does not change any cylinder width. By a slight abuse of notation, we will denote the horizontal heights by $h$ as we previously did for the actual heights.
        
        Similarly, we will also use the \emph{horizontal width} of a cylinder, which is the length of the projection of the cylinder to the horizontal axis. An argument analogous to the paragraph above shows that, when computing ratio of moduli, using the actual width of the horizontal width makes no difference. We denote the horizontal width by $w$.
        
        First consider the case $\modelL_{33}(-1)$ as in \Cref{fig:33--1}. We take the cylinder decomposition with slope $\frac{1}{\lambda}$, that is, in direction $\theta = \arctan\left(\frac{1}{\lambda}\right)$. 
        
        Let $C_1$ be the cylinder containing the points $w_4$ and $w_5$, and let $C_2$ be the other cylinder of the decomposition. We need to show that the ratio of the moduli of these cylinders belongs to $\frac{\mathrm{odd}}{\mathrm{odd}}$ or $\frac{\mathrm{even}}{\mathrm{odd}}$ to show that the transposition $(4\;5)$ belongs to $G(\modelL_{33}(-1))$.
        
        As already discussed, we will use horizontal heights instead of heights to compute the moduli. We denote these heights by $h_1$ and $h_2$, respectively. We have that these numbers satisfy the equations
        \begin{align*}
            h_1 + h_2 &= 1 \\
            3 h_1 + 2 h_2 &= \lambda.
        \end{align*}
        so $h_1 = \lambda - 2$ and $h_2 = 3 - \lambda$.
        
        The modulus of $C_1$ is then
        \[
            m_1 = \frac{ \lambda + 1 + 1 + 1 + 1 + 1 + h_2 + \lambda + h_1 + (\lambda - h_1) + (\lambda - 1) + 1}{h_1} = \frac{3\lambda + 8}{\lambda - 2},
        \]
        and the modulus of $C_2$ is then
        \[
            m_2 = \frac{ \lambda + (h_1 + h_2) + (\lambda - (h_1+h_2)) + 2(h_1+h_2) + 1 + 1 + 1 + 1 + 1 + 1}{h_2} = \frac{2\lambda + 8}{\lambda - 3}.
        \]
        
        We obtain that $r_\theta = \frac{m_1}{m_2} = \frac{-3\lambda^2 + \lambda + 24}{2 (\lambda^2 + 2\lambda - 8)}$. Since $\lambda = \frac{\sqrt{33} - 1}{2}$, an elementary computation shows that $r_\theta = 2$. Thus, $\tau_\theta = (4\;5) \in G(\modelL_{33}(-1))$. Combining this with \Cref{prop:D=1 mod8}, we obtain that $G(\modelL_{33}(-1)) \cong \Sym_3 \times \Sym_2 \cong \Dih_6$.
        
        We now focus our attention on $\modelL_{33}(1)$ and consider the cylinder decomposition with slope $\frac{3}{\lambda - 2}$, that is, with direction $\theta = \arctan\left(\frac{3}{\lambda - 2}\right)$.
        
        Let $C_1$ be the cylinder containing the points $w_4$ and $w_5$, and let $C_2$ be the other cylinder of the decomposition. As in the previous case, we need to show that the ratio of the moduli of these cylinders has odd denominator (when written as an irreducible fraction) to obtain that $(4\;5) \in G(\modelL_{33}(1))$. Let $h_1$ and $h_2$ be the horizontal heights of $C_1$ and $C_2$, respectively. We have that
        \begin{align*}
            h_1 + h_2 &= 1 \\
            3h_1 &= \lambda - 2.
        \end{align*}
        so $h_1 = \frac{\lambda - 2}{3}$ and $h_2 = \frac{5 - \lambda}{3}$.
        
        The modulus of $C_1$ is then
        \[
            m_1 = \frac{h_1 + h_1 + h_1 + 1 + 1 - h_1 + h_1}{h_1} = \frac{3\lambda}{\lambda - 2},
        \]
        and that the modulus of $C_2$ is then
        \[
            m_2 = \frac{1 + 1}{h_2} = \frac{6}{5 - \lambda}.
        \]
        We obtain that $r_\theta = \frac{m_1}{m_2} = \frac{\lambda(5 - \lambda)}{2(\lambda - 2)}$. Since $\lambda = \frac{\sqrt{33} + 1}{2}$, an elementary computation shows that $r_\theta = 2$. Thus, $\tau_\theta = (4\;5) \in G(\modelL_{33}(-1))$. Combining this with \Cref{prop:D=1 mod8}, we obtain that $G(\modelL_{33}(-1)) \cong \Sym_3 \times \Sym_2 \cong \Dih_6$.
        \qed

    \section{Classification of permutation groups}
    \label{sec:proof_H(2)}
    
    In this section we collect our results to give a complete proof of \Cref{thm:main_H(2)} and \Cref{cor:main_H(2)}.
    
    \begin{proof}[Proof of \Cref{thm:main_H(2)}]
    In the case $D \modn{4} 0$, \Cref{prop:D=0 mod4} shows a first containment, namely that $G(X)$ contains a subgroup conjugate to $\Dih_4$ for any $X \in \Omega W_D$. Moreover,
    \Cref{prop:arith-d2=0 mod4} gives the reverse inclusion in the arithmetic case,
    and \Cref{cor:upper-bounds}\ref{ub:even} provides the corresponding reverse inclusion in the non-arithmetic case.
    
    In the case $D \modn{8} 1$, \Cref{prop:arith-d2=1 mod8} and \Cref{cor:upper-bounds}\ref{ub:odd} show a first containment, that is, that $G(X)$ is contained in a group conjugate to $\Sym_3 \times \Sym_2 \cong \Dih_6$, for the arithmetic and non-arithmetic cases, respectively. The reverse inclusion can be deduced \emph{a posteriori} from \Cref{prop:D=1 mod8,prop:proper}, for $D \notin \{9,33\}$. Indeed, the former proposition shows that $G(X)$ contains a subgroup conjugate to $\Sym_3$, while the latter shows that it contains a product of two disjoint transpositions. Since no product of disjoint two transpositions belongs to $\Sym_3$, we conclude that $G(X)$ contains a \emph{proper} subgroup conjugate to $\Sym_3$. Thus, $G(X)$ must be isomorphic to $\Sym_3 \times \Sym_2 \cong \Dih_6$.
    
    In the case $D \modn{8} 5$, \Cref{prop:D=5 mod8} shows the first containment, namely that $G(X)$ contains a subgroup conjugate to $\Dih_5$ for any $X \in \Omega W_D$. Finally, \Cref{cor:upper-bounds-D=5 mod 8} gives the corresponding reverse inclusion.
    
    The only remaining cases are the exceptional discriminants $D = 9$ and $D = 33$. These cases are treated in \Cref{sec:D=9} and \Cref{sec:D=33}, respectively.
    \end{proof}
    
    \begin{proof}[Proof of \Cref{cor:main_H(2)}]
        For $D \modn{4} 0$ and $D \modn{8} 5$, we have shown this result in \Cref{prop:D=0 mod4,prop:D=5 mod8}.
        
        In the case of $D \modn{8} 1$ and $D \notin \{9,33\}$, we have by \Cref{prop:proper} that $G(X)$ contains a product of two disjoint transpositions induced by the action of a \emph{single} parabolic element. Moreover, \Cref{prop:D=1 mod8} shows that $G(X)$ contains a subgroup conjugate to $\Sym_3$, which is, in turn, generated by the action of two parabolic elements. We known that these three parabolic elements generate $G(X)$, but one of these elements is redundant. Indeed, assume that we label the regular Weierstrass points in such a way that $G(X)$ is contained in $\Sym(\{1, 2, 3\}) \times \Sym(\{4, 5\})$. Then, the product of two disjoint transpositions must be of the form $\tau = (r\; s)(4\;5)$, where $r < s$ and $r, s \in \{1, 2, 3\}$. Since there exist two parabolic elements in $\SL(X)$ whose actions generate a subgroup of $G(X)$ conjugate to $\Sym(\{1, 2, 3\})$, some parabolic element gives a transposition $\sigma \neq (r\;s)$. We have that $\langle \sigma, \tau\rangle = \Sym(\{1, 2, 3\}) \times \Sym(\{4, 5\})$, so we obtain the desired result.
        
        For $D \in \{9,33\}$, no parabolic element can produce a product of disjoint transpositions (see \Cref{rem:D=9/33}). In particular, since no pair of transpositions can generate a group conjugate to $\Sym(\{1,2,3\})\times\Sym(\{4,5\})$, there is no pair of parabolic elements that can generate the corresponding permutation group.
        On the other hand, we show in \Cref{sec:D=9} that $G(X)$ can be generated by the action of three parabolic elements for any $X \in \Omega W_9$ and, similarly, we show in \Cref{sec:D=33} that $G(X)$ can be generated by the action of three parabolic elements for any $X \in \Omega W_{33}$.
    \end{proof}

\sloppy
\printbibliography

\end{document}